\newtheorem{definition}{Definition}
\newtheorem{lemma}[definition]{Lemma}
\newtheorem{theorem}[definition]{Theorem}
\newtheorem{remark}[definition]{Remark}
\newcommand*{\N}{\ensuremath{\mathbb{N}}}
\newcommand*{\Z}{\ensuremath{\mathbb{Z}}}
\newcommand*{\R}{\ensuremath{\mathbb{R}}}
\newcommand*{\C}{\ensuremath{\mathbb{C}}}
\renewcommand{\i}{\mathrm{i}}
\renewcommand{\phi}{\varphi}
\renewcommand{\d}[1]{\,\mathrm{d}#1 \,}
\newcommand{\dS}{\,\mathrm{dS} \,}
\newcommand{\J}{\mathcal{J}} % Bloch transform
\renewcommand{\Re}{\mathrm{Re}\,}
\renewcommand{\Im}{\mathrm{Im}\,}
\newcommand{\W}{{W_{\hspace*{-1pt}\Lambda}}} % Wigner-Seitz cell
\newcommand{\Wast}{{W_{\hspace*{-1pt}\Lambda^*}}} % Brillouin zone
\newcommand{\p}{{\mathrm{p}}} % \bm{0}
\renewcommand{\rho}{{\varrho}} 
\renewcommand{\epsilon}{{\varepsilon}}
\newcommand{\loc}{{\mathrm{loc}}} 
\newlength{\dhatheight}
\newcommand{\dhat}[1]{%
    \settoheight{\dhatheight}{\ensuremath{\hat{#1}}}%
    \addtolength{\dhatheight}{-0.35ex}%
    \hat{\vphantom{\rule{1pt}{\dhatheight}}%
    \smash{\hat{#1}}}}
\begin{document}

\sloppy

\title{A Convergent Numerical Scheme for Scattering\\ of Aperiodic Waves from Periodic Surfaces\\ Based on the Floquet-Bloch Transform}
\author{Armin Lechleiter\thanks{Center for Industrial Mathematics, University of Bremen%, Bremen, Germany
; \texttt{lechleiter@math.uni-bremen.de}} \and 
Ruming Zhang\thanks{Center for Industrial Mathematics, University of Bremen%, Bremen, Germany
; \texttt{rzhang@uni-bremen.de}; corresponding author}}
\date{}
\maketitle

\begin{abstract}
Periodic surface structures are nowadays standard building blocks of optical devices. 
If such structures are illuminated by aperiodic time-harmonic incident waves as, e.g., Gaussian beams, the resulting surface scattering problem must be formulated in an unbounded layer including the periodic surface structure. 
An obvious recipe to avoid the need to discretize this problem in an unbounded domain is to set up an equivalent system of quasiperiodic scattering problems in a single (bounded) periodicity cell via the Floquet-Bloch transform.
The solution to the original surface scattering problem then equals the inverse Floquet-Bloch transform applied to the family of solutions to the quasiperiodic problems, which simply requires to integrate these solutions in the quasiperiodicity parameter.  
A numerical scheme derived from this representation hence completely avoids the need to tackle differential equations on unbounded domains. 
In this paper, we provide rigorous convergence analysis and error bounds for such a scheme when applied to a two-dimensional model problem, relying upon a quadrature-based approximation to the inverse Floquet-Bloch transform and finite element approximations to quasiperiodic scattering problems.  
Our analysis essentially relies upon regularity results for the family of solutions to the quasiperiodic scattering problems in suitable mixed Sobolev spaces. 
We illustrate our error bounds as well as efficiency of the numerical scheme via several numerical examples. 
\end{abstract}
%
% Old abstract: 
%
% We use the (Floquet-)Bloch transform to reformulate scattering problems for non-periodic incident fields from an unbounded periodic surface as a family of quasiperiodic scattering problems formulated on a bounded domain. 
% A numerical approximation of the inverse Bloch transform then allows to approximate the solution to the original scattering from the unbounded surface via numerical solutions to several of the quasiperiodic problems.
% The resulting numerical scheme merely needs to tackle quasiperiodic scattering problems on a bounded domain, avoiding the need to approximate solutions on an unbounded periodic domain. 
% 

\section{Introduction}
Time-harmonic scattering from unbounded periodic surface structures is a well-established topic in applied mathematics if one merely considers periodic or quasiperiodic incident fields such as, e.g., incident plane waves or incident periodic point sources.
Under this assumption one can reduce the scattering problem to a single unit cell of the periodic structure such that it usually suffices to apply standard techniques for existence and approximation theory, see, e.g.,~\cite{Chen1991, Nedel1991, Dobso1992a, Abbou1993, Bonne1994}. 
If aperiodic incident fields such as Gaussian beams or multi-poles with a single source point illuminate the periodic surface, this reduction fails such that the resulting surface scattering problem is naturally formulated in the unbounded domain above the surface; variational formulations can then be set up using exterior Dirichlet-to-Neumann operators in unbounded horizontal layers of finite height, see~\cite{Chand2005, Chand2010} and Figure~\ref{fig:0}.

Decomposing the incident field into its quasiperiodic components however shows that the total wave field actually equals the inverse (Floquet-)Bloch transform applied to a family of solutions to quasiperiodic scattering problems with right-hand side equal to the Bloch transform of the aperiodic incident field. 
This trick hence allows to completely avoid the need to deal with a scattering problem formulated on an unbounded horizontal strip above the periodic surface and potentially allows codes for quasiperiodic problems to tackle scattering of aperiodic incident fields.  
On the downside, one needs to be able to (analytically or numerically) compute the Bloch transform of the incident wave; semi-analytic expressions are for instance available for incident point sources or Herglotz wave functions, which are approximate for instance Gaussian beams, see~\cite{Lechl2016, Lechl2015e}. 
\begin{figure}\label{fig:0}
\begin{center}
    \includegraphics[width=0.6\linewidth]{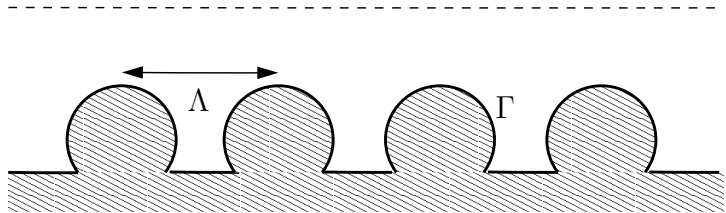}
  \end{center}
  \caption{Sketch of a periodic surface structure in two dimensions defined by a periodic surface $\Gamma$ wih period $\Lambda$. 
  Scattering problems for aperiodic incident fields are naturally formulated in the domain above $\Gamma$ and their variational formulations, relying on an exterior Dirichlet-to-Neumann operators, are typically set up in some layer of finite height in between $\Gamma$ and the dashed line.}
\begin{picture}(0,0)%
 \put(5.55,5.2){$\Lambda$}
 \put(9.6,5.1){$\Gamma$}
\end{picture}
\end{figure}%

Despite the Bloch transform seems to be part of the folklore of applied analysis, we are unaware of any algorithm in scattering theory--let alone convergence analysis--based on this representation.
In this paper, we hence aim to provide such an analysis together with associated error bounds for a numerical scheme relying on the Bloch transform to tackle surface scattering of aperiodic fields from periodic structures. 
To this end, we choose time-harmonic scattering described by the Helmholtz equation from a periodic surface with Dirichlet boundary condition in two dimensions as a model problem. 
Whilst the Dirichlet boundary condition might be replaced by, e.g., impedance or Robin-type conditions, the two-dimensional setting is somewhat crucial for our analysis: 
Central to our convergence result is a regularity theorem stating that under suitable assumptions the family of solutions to the quasiperiodic scattering problems is weakly differentiable in the quasiperiodicity parameter and the weak derivative belongs to $L^q$ for $1 \leq q < 2$ (but generally not for $q=2$). 
(This regularity theorem is based on several auxiliary results on the Bloch transform from~\cite{Lechl2016} that we state without proof.)
In particular, in two dimensions this family depends continuously on the quasiperiodicity such that standard interpolation projections are attractive for the  numerical approximation of the inverse Bloch transform. 
In higher dimensions, these interpolation projections might have to be replaced by more involved projection operators of Cl\'ement-type.  

Let us now briefly sketch the surface scattering problem we consider, together with the numerical scheme we propose to approximate its solution. 
Suppose that $\Omega\subset \R^2$ is a domain with $\Lambda$-periodic boundary $\Gamma$, see Figure~\ref{fig:0}, that is illuminated by some incident field $u^i$, which is a twice continuously differentiable solution to the Helmholtz equation $\Delta u^i + k^2 u^i = 0$ in $\overline{\Omega}$. 
(Strictly speaking, we merely have to assume that $u^i$ is smooth apart from, possibly, a lower-dimensional manifold where the incident field is generated.) 
The total field $u: \, \Omega \to \C$ then satisfies the Helmholtz equation $\Delta u + k^2 u = 0$ in $\Omega$, subject to Dirichlet boundary conditions $u=0$ on $\partial\Omega$. 
Finally, the scattered field $u^s = u - u^i$ has to satisfy a radiation conditions that we introduce below in~\eqref{eq:URC}. 
This model for instance describes electromagnetic scattering in TE mode from periodic surfaces independent of one spatial variable, see~\cite{Bonne1994}. 
The Bloch transform $w= \J_\Omega u$ of $u$ is defined by 
\[
  w(\alpha,x)  = \J_\Omega u(\alpha,x) :=  \left[\frac{\Lambda}{2\pi}\right]^{1/2} \sum_{j\in \Z} u(x_1 + \Lambda j, x_2) \, e^{-\i \, \Lambda j \, \alpha}, 
  \quad 
  \alpha \in \R, \ x= \left( \begin{smallmatrix} x_1 \\ x_2 \end{smallmatrix} \right) \in \Omega.
\]
This function is $\alpha$-quasiperiodic in its second argument, that is, $\J_\Omega u(\alpha,x+\Lambda) = \exp(\i\Lambda\alpha) \, \J_\Omega u(\alpha,x)$ holds for all $x\in\Omega$, such that its first argument $\alpha$ is called the quasiperiodicity. 
Further, $\J_\Omega u$ is $\Lambda^* = 2\pi/\Lambda$-periodic in $\alpha$ and $w(\alpha,\cdot)$ solves an $\alpha$-quasiperiodic scattering problem for the quasiperiodic incident field $\J_\Omega u^i$, see Section~\ref{se:qpScatt}. 
Solving this quasiperiodic problem for $N$ quasiperiodicities $\alpha_j$, $j=1,\dots,N$, by some convergent approximation scheme (we will rely on finite elements) hence yields discrete solutions $w_h(\alpha_j,\cdot)$. 
For points in $\Omega_H^\Lambda$, the inverse Bloch transform $\J_\Omega^{-1} w$ equals a constant $c_\Lambda$ times  $\int_{-\pi/\Lambda^\ast}^{\pi/\Lambda^*} w(\alpha, \cdot) \d{\alpha}$, such that we can approximate the exact solution $u = \J_\Omega^{-1} w$ in $\Omega_H^\Lambda$ by applying, e.g., the trapezoidal rule to the latter integral, $u_{N,h}(x) = 2\pi c_\Lambda/(N \Lambda^\ast) \sum_{j=1}^N w_h(\alpha_j, x)$ for $x \in \Omega_H^\Lambda$.
Under suitable assumptions on the incident field $u^i$, our main result (see Theorem~\ref{th:main}) shows that $\| u_{N,h} - u \|$ is bounded in the $L^2$-norm (or the $H^1$-norm) by some constant times $h^{2} + N^{-1}$ (or $h + N^{-1}$). 

The rest of this paper is structured as follows: 
The next Section~\ref{se:scatter} introduces the surface scattering problem from a periodic surface for basically arbitrary incident fields. 
Section~\ref{se:bloch} then introduces the Bloch transform on the periodic domain $\Omega$. 
This transform allows to reduce the surface scattering problem from Section~\ref{se:scatter} to a family of quasiperiodic scattering problems, see Section~\ref{se:qpScatt}. 
The inverse Bloch transform applied to an associated family of solutions then allows straightforward discretization by quadrature, which is analyzed in Section~\ref{se:errInvBloch}. 
Together with error estimates for finite element approximations to these quasiperiodic solutions we prove in Section~\ref{eq:errFemTotal} convergence of the resulting discrete approximation to the original surface scattering problem. 
Section~\ref{se:num} contains numerical examples confirming the theoretic convergence rates. 
 
\textit{Notation:} 
We write $x = (x_1,x_2)^\top$ or $y=(y_1,y_2)^\top$ for points in $\R^2$. 
The space of smooth functions in a domain $U$ with smooth extension up to arbitrarily high order to the boundary is $C^\infty(\overline{U})$. 
Constants $C$ and $c$ are generic and might change from line to line, and $\nu$ generically denotes the exterior unit normal field to a domain. 
% We further use the symbol $\simeq$ to indicate equivalence of two expressions up to positive constants.

\section{Aperiodic Incident Waves and Periodic Surfaces}\label{se:scatter}
We consider wave scattering from a $\Lambda$-periodic surface $\Gamma = \{ \zeta(y_1,0): \, y_1\in \R \}$ in the periodic domain of propagation $\Omega= \big\{ \zeta(y): \, y\in\R^2, \, y_2>0  \big\}$, both defined by a $\Lambda$-periodic Lipschitz diffeomorphism $\zeta: \, \R^2 \to \R^2$, i.e., $\zeta(x_1+\Lambda,x_2) = \zeta(x_1,x_2)$ for all $x\in\R^2$. 
We always assume that $\zeta = (\zeta(y)_1,\zeta(y)_2)^\top$ satisfies $\zeta(y)_2 > 0$ if $y_2\geq 0$, such that $\overline\Omega \subset \{ y\in \R^2: \, y_2 > 0\}$. 
Let us further fix some $H_0>0$ such that $\Gamma \subset \{ x_2 < H_0 \}$, set $\Omega_H = \{ x \in \Omega: \, x_2 < H \}$ for $H \geq H_0$. 
% , and define $\nu$ as the exterior (downwards pointing) unit normal to $\Omega$ on $\Gamma$.
As mentioned in the introduction, we rely on the Helmholtz equation at wave number $k>0$ for a scalar function $u$, 
\begin{equation}\label{eq:HE}
  \Delta u + k^2 u = 0 \quad \text{in } \Omega \subset \R^2 
\end{equation}
as a model for time-harmonic wave propagation. 
The so-called total field $u$ is caused by scattering of an incident wave $u^i$ that solves the Helmholtz equation from~\eqref{eq:HE} in a neighborhood of $\Gamma$, due to the Dirichlet boundary condition for the total field 
\begin{equation}\label{eq:BC}
  u = 0 \quad \text{on } \Gamma = \partial \Omega,
\end{equation}
and a radiation condition for the scattered field $u^s = u-u^i$ that arises as $u^i$ typically fails to satisfy~\eqref{eq:BC}. 
For variational solutions to scattering problems involving unbounded surfaces, one typically considers the so-called angular spectrum representation as radiation condition, see~\cite{Chand2005}. 
Setting $\sqrt{k^2 - |\xi|^2}  = \i \sqrt{|\xi|^2 - k^2}$ in case that $|\xi|^2 > k^2$, we hence require that the scattered field $u^s$ can be represented in the half space $\{ x_2>H_0 \}$ as  
\begin{equation}\label{eq:URC}
  u^s(x) = \frac{1}{2\pi} \int_{\R} e^{\i x_1  \xi + \i \sqrt{k^2 - |\xi|^2} (x_2-H_0)} \dhat{u}^s(\xi,H_0) \d{\xi} 
  \quad \text{for } x_2 > H_0.
\end{equation}
Here, $\dhat{u}^s(\xi,H_0)$ denotes the Fourier transform of $\left. u^s \right|_{\{ x_2 = H_0\}}$, defined by 
\begin{equation}
  \label{eq:fourierTrafo}
  \dhat{\phi}(z) := \mathcal{F}\phi(z) = \frac{1}{(2\pi)^{1/2}} \int_{\R} e^{-\i  z  x_1} \phi(x_1) \d{x_1} 
  \quad \text{for } \phi \in \C^\infty_0(\R, \C) \text{ and } z \in \R,
\end{equation}
and extended by density to functions in $L^2(\R)$. 
(For simplicity, we identify functions defined the hyperplane $\Gamma_H$ with functions on the real line.) 
If $u$ satisfies the radiation condition ~\eqref{eq:URC}, then the representation in~\eqref{eq:URC} actually holds true when $H_0$ is replaced by any $H>H_0$. 

We now set up a variational formulation for a weak solution $u$ to the scattering problem. 
For incident fields $u^i$ that belong to   
\[
  H^1_r (\Omega_H) = \left\{ u \in \mathcal{D}'(\Omega_H): \, (1+|x_1|^2)^{r/2} u \in H^1(\Omega_H) \right\} 
  \quad \text{for some } H \geq H_0  
\] 
and some $r\in\R$, we seek $u \in H^1_r (\Omega_H)$ that satisfies~\eqref{eq:BC} in the trace sense and the radiation condition~\eqref{eq:URC} for $x_2 > H_0$.  
To this end, we note that restriction of the equality in~\eqref{eq:URC} to $\Gamma_H$ provides a link between the normal derivative of $u^s$ on $\Gamma_H$ and the exterior Dirichlet-to-Neumann operator $T^+$, 
\begin{equation}\label{eq:T}
  \frac{\partial u^s}{\partial x_2}(x_1 , H) 
  = \frac{\i}{(2\pi)^{1/2}} \int_{\R} \sqrt{k^2 - |\xi|^2} \, e^{\i x_1 \xi} \, \dhat{u}^s(\xi,H) \d{\xi}
  =: T^+ \left( u|_{\Gamma_H} \right)(x_1 , H). 
\end{equation}%\marginpar{\high{Check continuity for $H^{1/2}_r$}}%
The operator $T^+$ is continuous from $H^{1/2}_r(\Gamma_H)$ into $H^{-1/2}_r(\Gamma_H)$ for all $|r|<1$, see~\cite{Chand2010, Chand2005}.
Integrating the Helmholtz equation~\eqref{eq:HE} against 
\[
  v \in \widetilde{H}^1_r (\Omega_H) = \{ v \in H^1_r (\Omega_H): \,  v|_{\Gamma} = 0 \} 
\] 
and integrating by parts thus shows that 
\begin{align*}
  0 & = \int_{\Omega_H} \left[ - \Delta u \overline{v} - k^2 u \,\overline{v} \right] \d{x}
  = \int_{\Omega_H} \left[ \nabla u \cdot \nabla \overline{v} - k^2 u \,\overline{v} \right] \d{x}
  - \int_{\Gamma_H} \frac{\partial u}{\partial \nu } \overline{v} \dS \\
  & = \int_{\Omega_H} \left[ \nabla u \cdot \nabla \overline{v} - k^2 u \,\overline{v} \right] \d{x}
  - \int_{\Gamma_H} \left[ \frac{\partial u^i}{\partial \nu } + T^+ [ u-u^i ] \big|_{\Gamma_H}  \right] \overline{v} \dS,
\end{align*}
where we exploited that $\partial u^s / \partial \nu = \partial u^s / \partial x_2 = T^+(\left. [ u-u^i ] \right|_{\Gamma_H})$ on $\Gamma_H$. 
The variational formulation of~\eqref{eq:HE}-\eqref{eq:BC} together with the radiation condition~\eqref{eq:URC} is hence to find $u \in \widetilde{H}^1_r (\Omega_H)$ such that 
\begin{equation}\label{eq:varFormHEScal}
  \int_{\Omega_H} \left[ \nabla u \cdot \nabla \overline{v} - k^2 u \,\overline{v} \right] \d{x}
  - \int_{\Gamma_H}  T^+ [ u ]\big|_{\Gamma_H} \overline{v} \dS 
  = \int_{\Gamma_H} \left[ \frac{\partial u^i}{\partial x_2} - T^+ [u^i]\big|_{\Gamma_H} \right] \overline{v} \dS   
\end{equation}
for all $v \in \widetilde{H}^1_r (\Omega_H)$ with compact support in $\overline{\Omega_H}$.
(Choosing test functions with compact support makes the variational formulation well-defined, independent of whether continuity of $T^+$ from $H^{1/2}_r(\Gamma_H)$ into $H^{-1/2}_r(\Gamma_H)$ holds or not.)
% (Thus, the sesquilinear form in~\eqref{eq:varFormHEScal} is not necessarily bounded for $|r|>1$.) 
We are going to tackle this surface scattering problem using the Bloch transform, introduced in the next section. 
\begin{remark}
  If $\Gamma = \partial\Omega = \{ x\in \R^2: \, x_2 > \zeta(x_1) \}$ additionally is graph of a piecewise continuous function $\zeta: \, \R \to \R$, then it is known that~\eqref{eq:varFormHEScal} is uniquely solvable for all $|r|<1$ if the incident field $u^i$ defines a right-hand side in the dual of $\widetilde{H}^1_r (\Omega_H)$, see~\cite{Chand2010}. 
\end{remark}

\section{The Bloch Transform}\label{se:bloch}
The Bloch transform reduces acoustic scattering problems from periodic surfaces with non-periodic boundary data to quasiperiodic scattering problems from the unit cell of the periodic surface. 
We first define a one-dimensional Bloch transform $\J_{\R}$ on smooth functions $\phi$ with compact support by  
\begin{equation}
  \label{eq:BlochZeta}
  \J_{\R} \phi(\alpha, x_1) 
  := \left[\frac{\Lambda}{2\pi}\right]^{1/2} \sum_{j\in \Z} \phi(x_1 + \Lambda j) \, e^{-\i \, \Lambda j \, \alpha},
  \quad  \alpha \in \R, \, x_1 \in \R. 
\end{equation}
A standard reference for this transform is~\cite{Kuchm1993}. 
Further,~\cite[Annexe B]{Fliss2009} is an excellent source for properties of the one-dimensional Bloch transform.

One easily computes that $\J_\R \phi(\alpha, \cdot)$ is $\alpha$-quasiperiodic for the period~$\Lambda$, 
\begin{equation} \label{eq:qpExt}
  \J_{\R} \phi(\alpha, x_1 +\Lambda)
  = \left[\frac{\Lambda}{2\pi}\right]^{1/2} \sum_{j\in \Z} \phi(x_1 + \Lambda (j+1)) \, e^{-\i \, \Lambda j \, \alpha}
  %= \left[\frac{\Lambda}{2\pi}\right]^{1/2} \sum_{j\in \Z} \phi(x_1 + \Lambda j) \, e^{\i \alpha \cdot \Lambda (j-1)}
  = e^{\i \, \Lambda j \, \alpha} \J_{\R} \phi(\alpha, x_1), 
  \quad  x_1 \in \R.
\end{equation}
Further, $\J_\R \phi$ is for fixed $x_1$ a Fourier series in $\alpha$ with basis functions $\alpha \mapsto \exp(-\i \, \Lambda j \, \alpha)$ that are $\Lambda^* = 2\pi/\Lambda$ periodic. 
Thus, setting 
\[
  \W = \bigg(-\frac\pi\Lambda, \frac\pi\Lambda \bigg]
  \quad \text{and} \quad 
  \Wast = \bigg( -\frac\pi{\Lambda^*}, \frac\pi{\Lambda^*}\bigg] 
  = \bigg( -\frac{\Lambda}{2}, \frac{\Lambda}{2}\bigg] 
\] 
show that knowledge of $(\alpha,x_1) \mapsto \J_{\R} \phi(\alpha, x_1)$ in $\Wast \times \W$ defines that function everywhere in $\R \times \R$.  
%
% Note that the multiplication of $\J_\R$ by $\exp(\i \, x_1 \, \alpha)$ interchanges these two properties, as $\exp(\i \, \alpha \, x_1) \J_\R \phi(\alpha, x_1) $ is $\Lambda$-periodic in $x_1$ and $x_1$-quasiperiodic in~$\alpha$. 
%
%Let us further note already here that the Bloch transform commutes with $\Lambda$-periodic functions: If $q: \, \R \to \C$ is $\Lambda$-periodic, then 
%\begin{equation}\label{eq:JComPerio}
%  [\J_{\R} (q\phi)](\alpha, x_1) 
%  = \frac{|\det \Lambda|}{2\pi}^{1/2} \sum_{j\in\Z} q(x_1 + \Lambda j) \phi(x_1 + \Lambda j) e^{\i\alpha\cdot \Lambda j}
%  = q(x_1) \, (\J_{\R} \phi) (\alpha, x_1).
%\end{equation}
%
To indicate mapping properties of $\J_{\R}$ we recall the Bessel potential spaces $H^s(\R)$ for $s\in \R$, together with their weighted analogues, 
\[
  H^s_r(\R) := \left\{ \phi \in \mathcal{D}'(\R): \, x_1 \mapsto (1+|x_1|^2)^{r/2} \phi(x_1) \in H^s(\R) \right\},
  \qquad s,r \in \R, 
\]
equipped with the norm $\| \phi \|_{H^s_r(\R)} = \| x_1 \mapsto (1+|x_1|^2)^{r/2} \phi(x_1)  \|_{H^s(\R)}$.
To define spaces of periodic functions, note that the smooth, $\Lambda$-periodic functions 
\begin{equation}\label{eq:LPerF}
  \phi_\Lambda^{(j)}(x_1) := \Lambda^{-1/2} \ e^{\i \, \Lambda^* j\,  x_1}, \qquad j\in\Z,
\end{equation}
form a complete orthonormal system in $L^2(\W)$.
For any $\alpha \in \Wast$, the space $\mathcal{D}_\alpha '(\R, \C)$ contains all $\alpha$-quasiperiodic distributions $\phi$ with respect to $\Lambda$, i.e., the products of all periodic distributions, see~\cite{Saran2002}, with $x_1 \mapsto \exp(\i \alpha \,  x_1)$. 
For such distributions and $j \in \Z$, we define Fourier coefficients  
\begin{equation}
  \label{eq:fourierCoeff}
  \hat{\phi}(j) := \phi\left(x_1 \mapsto \overline{\exp(\i \alpha \, x_1) \phi_\Lambda^{(j)}(x_1)}\right) 
  \, \left[ = \frac1{\sqrt\Lambda} \int_{\W} \phi(x_1) e^{-\i [\Lambda^* j + \alpha] x_1 } \d{x_1}
  \ \text{if } \phi \in L^2(\W) \right].
\end{equation}
Further, for $s\in\R$ we introduce the subspace $H^s_\alpha(\W)$ of $\mathcal{D}'_\alpha(\R, \C)$ containing all $\alpha$-quasiperiodic distributions $\phi$ with finite norm $\| \phi \|_{H^s_\alpha(\W)} = ( \sum_{j \in \Z} (1+|j|^2)^s \, |\hat{\phi}(j)|^2 )^{1/2}$. 
Elements of $H^s_\alpha(\W)$ can be represented by their Fourier series, i.e.,  
\[
  \phi(x_1) = \sum_{j\in\Z} \hat{\phi}(j) e^{\i \alpha \, x_1 } \phi_\Lambda^{(j)}(x_1) 
  = \frac{1}{| \det \Lambda|^{1/2}} \sum_{j\in\Z} \hat{\phi}(j) e^{\i (\Lambda^* j+\alpha) \, x_1 }   
  \quad \text{holds in $H^s_\alpha(\W)$.}
\]
We have already noted above that the Bloch transform $\J_{\R}\phi$ extends to a $\Lambda^*$-periodic function in $\alpha$ and to a quasiperiodic function in $x_1$ with quasiperiodicity $\alpha$. 
It is natural that we require adapted function spaces in $(\alpha,x_1)$.  
To this end, we introduce the vector space $\mathcal{D}_{\Lambda}'(\R^2)$ of distributions in $\mathcal{D}'(\R \times \R)$ that are $\Lambda^*$-periodic in their first and quasiperiodic with respect to $\Lambda$ in their second variable, with quasiperiodicity equal to the first variable.
For integers $r\in \N$ and $s\in \R$, these distributions allow to define a norm via 
\begin{equation}\label{eq:HrHs}
  \| \psi \|_{H^\ell_\p(\Wast; H^s_\alpha(W))}^2
  =  \sum_{\gamma = 1}^\ell \int_\Wast \| \partial^{\gamma}_\alpha \psi(\alpha, \cdot) \|_{H^s_\alpha(\W)}^2 \d{\alpha}, 
\end{equation}
and the corresponding Hilbert space as $H^r_\p(\Wast; H^s_\alpha(W)) = \{ \psi \in \mathcal{D}_{\Lambda}'(\R^2):\, \| \psi \|_{H^\ell_\p(\Wast; H^s_\alpha(W))} < \infty \}$. 
Interpolation in $\ell$ and a duality argument subsequently allows to define these spaces for all $r \in \R$, see~\cite{Lions1972, Bergh1976, Fliss2009} or~\cite[Th.~4]{Lechl2016}.

\begin{theorem}\label{th:BlochR}
(a) The Bloch transform $\J_{\R}$ extends from $C^\infty_0(\R)$ to an isometric isomorphism between $L^2(\R)$ and $L^2(\Wast; L^2(\W))$ with inverse
\begin{equation}  \label{eq:X1}
    \left( \J_{\R}^{-1} \tilde{\phi} \right) (x_1 + \Lambda j)
    = \left[\frac{\Lambda}{2\pi}\right]^{1/2} \int_{\Wast} \tilde{\phi}(\alpha, x_1) e^{\i \, \Lambda j \alpha} \d{\alpha}, 
    \qquad \text{for } x_1 \in \W, \, j\in\Z.  
\end{equation}
Further, the $L^2$-adjoint of $J_\R^*: \, L^2(\Wast; L^2(\W)) \to L^2(\R)$ equals its inverse.\\[1mm]
(b) For $s$ and $r \in \R$, the Bloch transform $\J_{\R}$ extends from $C^\infty_0(\R)$ to an isomorphism between $H^s_r(\R)$ and $H^r_\p(\Wast; H^s_\alpha(\W))$. 
Its inverse transform is given by~\eqref{eq:X1} with equality in the sense of the norm of $H^s_r(\R)$. 
\end{theorem}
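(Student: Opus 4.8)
The plan is to settle part~(a) by a direct computation and then to bootstrap part~(b) from it by realising $\J_\R$ as an incarnation of the Fourier transform, closing with an interpolation and duality argument. For part~(a), fix $\phi\in C^\infty_0(\R)$ and observe that for each fixed $x_1\in\W$ the map $\alpha\mapsto\J_\R\phi(\alpha,x_1)$ is, up to the normalising constant, simply the Fourier series on $\Wast$ whose coefficients are the translates $\phi(x_1+\Lambda j)$, $j\in\Z$. Parseval's identity for that Fourier series, followed by the re-summation that turns $\sum_{j\in\Z}\int_\W f(x_1+\Lambda j)\d{x_1}$ into $\int_\R f\d{x_1}$, gives $\|\J_\R\phi\|_{L^2(\Wast;L^2(\W))}=\|\phi\|_{L^2(\R)}$, so $\J_\R$ extends by density to an isometry of $L^2(\R)$ into $L^2(\Wast;L^2(\W))$. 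For surjectivity I would note that when $\phi$ is supported in a single period cell only one summand of the defining series survives, so the range contains every product of a function $g\in C^\infty_0$ in $x_1$ with a character $\alpha\mapsto e^{-\i\Lambda j\alpha}$; the span of these is dense and the range of an isometry is closed, hence $\J_\R$ is onto. A surjective isometry between Hilbert spaces is unitary, so $\J_\R^{*}=\J_\R^{-1}$, and expanding a generic $\tilde\phi(\alpha,\cdot)\in L^2(\Wast)$ in the orthonormal basis of characters and matching coefficients against the definition of $\J_\R$ produces precisely the inversion formula~\eqref{eq:X1}.

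The backbone of part~(b) is the identity --- obtained from the definition of $\J_\R$ by inserting the Fourier inversion formula for $\phi$ and applying Poisson summation --- that the $m$-th $\alpha$-quasiperiodic Fourier coefficient of $\J_\R\phi(\alpha,\cdot)$ equals the sample $\dhat{\phi}(\alpha+\Lambda^{*}m)$. This exhibits $\J_\R$, modulo the Fourier transform, as the canonical ``unfolding'' isometry that re-packages a function on $\R$ into an $\ell^2(\Z)$-valued function on $\Wast$ via the partition $\R=\bigcup_{m\in\Z}(\Wast+\Lambda^{*}m)$. Since the discrete weight $(1+|m|^{2})^{s}$ is comparable, with constants depending only on $s$ and $\Lambda$, to the continuous weight $(1+|\xi|^{2})^{s}$ on the fibre $\Wast+\Lambda^{*}m$, summing the $H^{s}_\alpha(\W)$-norms over $m$ and integrating in $\alpha$ reassembles $\int_\R(1+|\xi|^{2})^{s}|\dhat{\phi}(\xi)|^{2}\d{\xi}$; this proves the case $r=0$ of~(b) for every $s\in\R$, with the inverse again read off from~\eqref{eq:X1}.

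To incorporate $\alpha$-regularity I would differentiate the sampling identity in $\alpha$ and use $\partial_\xi^{\gamma}\dhat{\phi}=\F[(-\i x_1)^{\gamma}\phi]$: at the level of Fourier coefficients this shows that $\partial_\alpha^{\gamma}$ applied to $\J_\R\phi$ corresponds to $\J_\R$ applied to $(-\i x_1)^{\gamma}\phi$ --- one must argue through the coefficient functions, because $\partial_\alpha$ does not literally preserve $\alpha$-quasiperiodicity. Summing over $\gamma=0,\dots,\ell$ and using the $r=0$ case gives $\|\J_\R\phi\|_{H^{\ell}_\p(\Wast;H^{s}_\alpha(\W))}^{2}\asymp\sum_{\gamma=0}^{\ell}\|x_1^{\gamma}\phi\|_{H^{s}(\R)}^{2}$, and the right-hand side is in turn equivalent to $\|\phi\|_{H^{s}_\ell(\R)}^{2}$. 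This settles~(b) for all $s\in\R$ and all $\ell\in\N$; interpolation in $\ell$ and the duality identification of the dual spaces --- legitimate since $\J_\R$ is $L^2$-unitary, so the $L^2(\Wast\times\W)$ and $L^2(\R)$ pairings are intertwined --- then extend the isomorphism to all $r\in\R$, after which the inversion formula on $H^{s}_r(\R)$ follows by density.

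The headline identities are elementary; I expect the real work to lie in two places. First, the sampling identity and the $\partial_\alpha$-correspondence must be made rigorous for distributions rather than test functions, which forces the bookkeeping to be carried out through the Fourier coefficient functions (equivalently, after stripping off the factor $e^{\i\alpha x_1}$ that carries the quasiperiodicity). Second, the equivalence $\sum_{\gamma\le\ell}\|x_1^{\gamma}\phi\|_{H^{s}(\R)}^{2}\asymp\|\phi\|_{H^{s}_\ell(\R)}^{2}$ is a commutator estimate --- one moves the weight $(1+x_1^{2})^{\ell/2}$ past the Bessel potential $\langle D\rangle^{s}$ and controls the lower-order error terms --- and this, together with the interpolation and duality needed for general real $r$, is the main technical obstacle; the underlying facts about weighted Sobolev spaces are standard and can be quoted from the references already cited in the text.
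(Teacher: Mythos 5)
The paper does not prove Theorem~\ref{th:BlochR} at all: it is quoted from the literature (Kuchment, Fliss's thesis Annexe~B, and Theorem~4 of the Lechleiter 2016 preprint), so there is no in-paper argument to compare against. Your proposal is essentially the standard proof found in those references, and it is sound. Part~(a) via Parseval in $\alpha$ plus unfolding $\sum_j\int_{\W}=\int_{\R}$, surjectivity from single-cell supported functions, and unitarity giving \eqref{eq:X1} is exactly right. For part~(b), the sampling identity $\widehat{(\J_\R\phi)(\alpha,\cdot)}(m)=\dhat{\phi}(\alpha+\Lambda^*m)$ is correct (it is implicitly used in the paper in Theorem~\ref{th:equiScalarPerio}, where $\hat w(\alpha,j)=\dhat u(\Lambda^*j+\alpha,H)$), and the comparability of $(1+|m|^2)^s$ with $(1+|\xi|^2)^s$ on each fibre does give the $r=0$ case. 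You correctly identify the two genuine technical points: (i) $\partial_\alpha$ must be interpreted through the coefficient functions (or after stripping $e^{\i\alpha x_1}$), since the target space $H^s_\alpha(\W)$ itself moves with $\alpha$ --- note in passing that the norm \eqref{eq:HrHs} as printed starts the sum at $\gamma=1$, which must be a typo for $\gamma=0$; and (ii) the equivalence $\sum_{\gamma\le\ell}\|x_1^\gamma\phi\|_{H^s(\R)}^2\asymp\|\phi\|_{H^s_\ell(\R)}^2$ is a commutator estimate for the Bessel potential against polynomial weights, which is standard but does need to be invoked. With those two points filled in by citation, the argument is complete and matches what the cited sources do.
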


Next we define an analogous Bloch transform between Sobolev spaces on periodic domains. 
We have already introduced weighted Sobolev spaces $H^s_r(\Omega_H)$ on the unbounded domain $\Omega_H$ in Section~\ref{se:scatter}. 
As a further ingredient for the subsequent result on a volumetric Bloch transform $\J_\Omega$, we consider the set of smooth functions with compact support in $\Omega$, restrict these functions to $\Omega_H$, and denote their closure in $H^s_r(\Omega_H)$ by $\widetilde{H}^s_r(\Omega_H)$. 
(For $s=1$, we already used such spaces in Section~\ref{se:scatter}.)
To state mapping properties of 
\begin{equation} \label{eq:blochOmega}
  \J_\Omega u (\alpha,x) 
  = \left[ \frac{\Lambda}{2\pi} \right]^{1/2} \sum_{j\in \Z} u\left( \begin{smallmatrix} x_1 +\Lambda j \\ x_2 \end{smallmatrix} \right) e^{-\i \, \Lambda j \, \alpha}, 
  \qquad  x = \left( \begin{smallmatrix} x_1 \\ x_2 \end{smallmatrix} \right) \in \Omega_H, \, \alpha \in \R,
\end{equation}
defined for $u \in C^\infty(\overline\Omega_H)$ with compact support, let us further introduce suitable quasiperiodic spaces on the restriction $\Omega_H^\Lambda  = \big\{ x\in\Omega_H: \, x_1 \in \W \big\}$ of $\Omega_H$ to the fundamental domain of periodicity $\W$.
%Thus, $\Omega_H = \bigcup \{ (x_1 +j,x_2)^\top: \, (x_1 ,x_2)^\top \in \Omega_H^\Lambda, \, j \in \Z \}$ and $\Gamma = \bigcup \{ (x_1 +j,x_2)^\top: \, (x_1 ,x_2)^\top \in \Gamma_\Lambda, \, j \in \Z \}$. 
For $\alpha \in \Wast = (-\Lambda/2, \Lambda/2]$, we introduce the Hilbert space 
\begin{equation}
  H^s_\alpha(\Omega_H^\Lambda)
  = \left\{ u = U|_{\Omega_H^\Lambda}: \, U \in H^s_{-1}(\Omega_H) \text{ is $\alpha$-quasiperiodic with respect to } \Lambda \right\},
\end{equation}
with obvious norm and inner product. 
(The decay of $U$ is arbitrary as long as the decay parameter $r$ of the space $H^s_{r}$ is less than $-1/2$.)
Again, $\widetilde{H}^s_\alpha(\Omega_H^\Lambda)$ is the closure of smooth, $\alpha$-quasiperiodic functions on $\Omega$ with compact support in the norm of $H^s_\alpha(\Omega_H^\Lambda)$. 

\begin{theorem}[Th.~8 in~\cite{Lechl2016}]\label{th:BlochOmega}
% (a)  
The Bloch transform $\J_\Omega$ extends to an isomorphism between $H^s_r(\Omega_H)$ and $H^r_\p(\Wast; H^s_\alpha(\Omega_H^\Lambda))$ as well as between $\widetilde{H}^s_r(\Omega_H)$ and $H^r_\p(\Wast; \widetilde{H}^s_\alpha(\Omega_H^\Lambda))$ for all $s,r \in \R$. 
Further, $\J_\Omega$ is an isometry for $s=r=0$ with inverse 
\begin{equation}
  \label{eq:JOmegaInverse}
  \left( \J_\Omega^{-1} w \right) \left(x+ \left( \begin{smallmatrix} \Lambda j \\ 0 \end{smallmatrix} \right) \right)
  = \left[ \frac{\Lambda}{2\pi} \right]^{1/2} 
  \int_\Wast w(\alpha, x) \, e^{\i \, \Lambda j \alpha} \d{\alpha}
  \qquad \text{for }  x \in \Omega_H^\Lambda, \, j\in\Z .
\end{equation}
% As in Theorem~\ref{th:BlochR}, $w(\alpha, \cdot)$ denotes the $\alpha$-quasiperiodic extension to $w \in H^r_\p(\Wast; \widetilde{H}^s_\alpha(\Omega_H^\Lambda))$. 
\end{theorem}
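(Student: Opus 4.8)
The plan is to reduce the statement about $\J_\Omega$ to the one-dimensional result in Theorem~\ref{th:BlochR}, which we may invoke freely. The key observation is that the volumetric Bloch transform $\J_\Omega$ from~\eqref{eq:blochOmega} acts only on the first ($x_1$) variable: if, for a fixed $x_2$, we write $u(\cdot,x_2)$ as a function on $\R$, then $\J_\Omega u(\alpha,\cdot,x_2) = \J_\R\big(u(\cdot,x_2)\big)(\alpha,\cdot)$. Thus $\J_\Omega$ is nothing but $\J_\R$ applied "slicewise" in $x_1$, with $x_2$ a passive parameter ranging over the bounded interval determined by $\Omega_H$. First I would make this slicewise interpretation rigorous on the dense subspace $C^\infty(\overline{\Omega_H})$ with compact support: there the sums in~\eqref{eq:BlochZeta} and~\eqref{eq:blochOmega} are finite, Fubini applies, and all manipulations are elementary.

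Second, I would establish the isometry for $s=r=0$. Writing $L^2(\Omega_H)$ as (a closed subspace of) $L^2(\R_{x_1}; L^2(\text{cross-section in } x_2))$ — more precisely using that $\Omega_H$ is a $\Lambda$-periodic domain so that $x \mapsto (x_1 \bmod \Lambda, \text{rest})$ gives a measurable identification of $\Omega_H$ with $\W \times (\text{fibers})$ — the $L^2(\R) \cong L^2(\Wast; L^2(\W))$ isometry of Theorem~\ref{th:BlochR}(a) integrated over the $x_2$-fibers yields the $L^2(\Omega_H) \cong L^2(\Wast; L^2(\Omega_H^\Lambda))$ isometry. The inverse formula~\eqref{eq:JOmegaInverse} is then just~\eqref{eq:X1} with the passive variable $x_2$ carried along, and the adjoint-equals-inverse statement transfers likewise. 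For the weighted/higher-order case, the spaces $H^s_r(\Omega_H)$ and $H^r_\p(\Wast; H^s_\alpha(\Omega_H^\Lambda))$ are built precisely so that the $x_1$-direction sees the one-dimensional scale $H^s_r(\R) \leftrightarrow H^r_\p(\Wast; H^s_\alpha(\W))$ of Theorem~\ref{th:BlochR}(b), while the $x_2$-direction contributes tangential derivatives and a cross-sectional $L^2$ (or $H^s$) norm that $\J_\Omega$ leaves untouched; one checks the norm equivalence first for $s$ a nonnegative integer and $r=0$ by differentiating under the transform, then for general $s \geq 0$, $r \in \N$ by the definition~\eqref{eq:HrHs}, and finally for all real $s,r$ by interpolation and duality exactly as in the scalar case (references already cited: \cite{Lions1972, Bergh1976}). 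The claim for $\widetilde{H}^s_r$ follows because $\J_\Omega$ maps the generating set of smooth compactly supported functions on $\Omega$ onto smooth $\alpha$-quasiperiodic compactly supported functions on $\Omega^\Lambda$ (and back via~\eqref{eq:JOmegaInverse}), so the transform intertwines the two closures.

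The main obstacle is bookkeeping at the level of function spaces rather than any genuine analytic difficulty: one must be careful that the identification of $\Omega_H$ with a product $\W \times (\text{fiber})$ is only measurable, not smooth, so the reduction to $\J_\R$ is clean at the $L^2$ and tangential-derivative level but the normal ($x_2$) derivatives and the trace/quasiperiodicity conditions must be handled so that the geometry of $\Omega_H$ (encoded through the diffeomorphism $\zeta$) is respected — in particular one wants $H^s_\alpha(\Omega_H^\Lambda)$ to be exactly the image of $\alpha$-quasiperiodic elements of $H^s_{-1}(\Omega_H)$, which is how the space was defined. Since this is Theorem~8 of~\cite{Lechl2016}, the cleanest exposition here is to state that the proof is a verbatim adaptation of the scalar argument of Theorem~\ref{th:BlochR} applied in the $x_1$-variable with $x_2$ as parameter, and to refer to \cite{Lechl2016} for the details, only highlighting the slicewise identity $\J_\Omega u(\alpha,\cdot,x_2) = \J_\R(u(\cdot,x_2))(\alpha,\cdot)$ and the consequent transfer of~\eqref{eq:X1} to~\eqref{eq:JOmegaInverse}.
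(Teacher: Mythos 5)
This theorem is stated in the paper without proof: it is imported verbatim as Theorem~8 of~\cite{Lechl2016}, so there is no in-paper argument to compare against. Your slicewise reduction --- the identity $\J_\Omega u(\alpha,\cdot,x_2)=\J_\R\big(u(\cdot,x_2)\big)(\alpha,\cdot)$ with $x_2$ as a passive parameter, Fubini for the $L^2$-isometry and the inverse formula~\eqref{eq:JOmegaInverse}, commutation with $\partial_{x_1},\partial_{x_2}$ on the dense smooth class for integer $s$ and $r$, then interpolation and duality for general $s,r\in\R$, with the $\widetilde{H}$-statement obtained by tracking the generating set of compactly supported smooth functions --- is the natural and essentially correct strategy, and it is the one used in the cited reference. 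Your closing caveat is the right one to flag: the identification of $\Omega_H$ with $\W\times(\text{fibers})$ is only measurable, and the definition of $H^s_\alpha(\Omega_H^\Lambda)$ as restrictions of quasiperiodic elements of $H^s_{-1}(\Omega_H)$ is precisely what makes the norm comparison for noninteger $s$ on the non-product domain go through; a complete write-up would spell out that step rather than leave it as bookkeeping, but as a blind reconstruction of an externally cited result the outline is sound.
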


Finally, we introduce Sobolev spaces on $\Gamma_H$ and $\Gamma_H^\Lambda = \{  x \in \Gamma_H: \, x_1 \in \W \} \subset \Gamma_H$ by identifying $\Gamma_H$ with $\R$ and $\Gamma_H^\Lambda$ with $\W$. 
The resulting spaces are then denoted by $H^s_r(\Gamma_H)$, $H^s_\alpha(\Gamma_H^\Lambda)$, and $H^r_\p(\Wast; H^s_\alpha(\Gamma_H^\Lambda))$ for $s,r\in \R$.

\section{Quasiperiodic Scattering Problems}\label{se:qpScatt}
The Bloch transform of a solution $u$ to the variational formulation~\eqref{eq:varFormHEScal} of the surface scattering problem involving the periodic surface $\Gamma$ solves the variational formulation of a corresponding quasiperiodic  scattering problem. 
This variational formulation relies on a periodic Dirichlet-to-Neumann operator $T_\alpha^+$ on $\Gamma_H^\Lambda\subset \Gamma_H$ that is continuous from $H^{s}_\alpha(\Gamma_H^\Lambda)$ into $H^{s+1}_\alpha(\Gamma_H^\Lambda)$ for all $s\in \R$, 
\begin{equation}\label{eq:perioDtN}
   T_\alpha^+ \left( \varphi \right)\Big|_{\Gamma_H^\Lambda} 
   = \bigg[\i \sum_{j\in\Z} \sqrt{k^2 - |\Lambda^* j + \alpha|^2} \, \hat{\varphi}(j) \, e^{\i (\Lambda^* j + \alpha)  x_1} \bigg]\bigg|_{\Gamma_H^\Lambda}
   \quad \text{for } 
   \varphi = \sum_{j\in\Z} \hat{\varphi}(j) e^{\i (\Lambda^* j + \alpha)  x_1}.
\end{equation}
Obviously, $T_\alpha^+$ corresponds to $T^+$ from~\eqref{eq:T}. 
We further introduce a bounded sesquilinear form $a_\alpha $ on $\widetilde{H}^1_\alpha(\Omega_H^\Lambda) \times \widetilde{H}^1_\alpha(\Omega_H^\Lambda)$, 
\[
  a_\alpha(w,v) 
  := \int_{\Omega_H^\Lambda} \Big[ \nabla_x w \cdot \nabla_x \overline{v} - k^2 w(\alpha, \cdot) \, \overline{v} \Big] \d{x} 
  - \int_{\Gamma_H^\Lambda} T_\alpha^+ \left( w|_{\Gamma_H^\Lambda} \right) \, \overline{v}|_{\Gamma_H^\Lambda} \dS, 
\]
and state an equivalence result that can be shown along the lines of Theorem~9 in~\cite{Lechl2016}. 

\begin{theorem}\label{th:equiScalarPerio}
Suppose $u^i$ is a twice differentiable solution of the Helmholtz equation in $\Omega$ that belongs to $H^1_r(\Omega_H)$ for some $r \in \R$.  
Then $u \in \widetilde{H}^1_r(\Omega_H^\Lambda)$ solves~\eqref{eq:varFormHEScal} if and only if $w := \J_\Omega u \in H^r_\p(\Wast; \widetilde{H}^1_\alpha(\Omega_H^\Lambda))$ solves   
\begin{equation}\label{eq:heAlpha}
  a_\alpha(w(\alpha,\cdot),v)
  = \int_{\Gamma_H^\Lambda} \left[ \frac{\partial }{\partial \nu} \J_\Omega u^i (\alpha,\cdot)- T_\alpha^+ \left[\J_{\Gamma_H} u^i (\alpha,\cdot)\big|_{\Gamma_H^\Lambda} \right] \right] \overline{v} \dS  
\end{equation}
for almost every $\alpha \in \Wast$ and all $v \in \widetilde{H}^1_\alpha(\Omega_H^\Lambda)$. 
%
% \high{Maybe suppress the following (not strictly necessary ?):} 
%
If we denote the Fourier coefficients of $w(\alpha,\cdot)|_{\Gamma_H^\Lambda}$ as in~\eqref{eq:perioDtN} by $\hat{w}(\alpha,j)$, then there holds that $\hat{w}(\alpha,j) = \dhat{u}(\Lambda^* j + \alpha, H)$, see~\eqref{eq:fourierTrafo}. 
Extending $w(\alpha, \cdot)$ by $\alpha$-quasiperiodicity to $\Omega_H$, these coefficients allow to further extend  
\begin{equation}\label{eq:URCAlpha}
  w(\alpha, x) = 
  \Lambda^{-1/2} \, \sum_{j\in\Z} \hat{w}(\alpha,j) \, e^{\i (\Lambda^* j + \alpha ) x_1 + \i \beta(j) x_2} \, 
  \ \text{ for $x_2 \geq H$, } \beta(j) = \sqrt{k^2 - | \Lambda^* j + \alpha|^2},
\end{equation}
which is an $\alpha$-quasiperiodic weak solution of $\Delta_x w(\alpha,\cdot) + k^2 w(\alpha, \cdot) =0$ in $\Omega$ that belongs to $\widetilde{H}^1_r(\Omega_{H'}^\Lambda)$ for all $H'\geq H$.
\end{theorem}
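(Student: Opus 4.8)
The plan is to establish \eqref{eq:heAlpha} by transforming the variational formulation \eqref{eq:varFormHEScal} via the Bloch transform $\J_\Omega$, exploiting the isomorphism property from Theorem~\ref{th:BlochOmega}. First I would recall from Theorem~\ref{th:BlochOmega} that $\J_\Omega$ is an isomorphism between $\widetilde H^1_r(\Omega_H)$ and $H^r_\p(\Wast;\widetilde H^1_\alpha(\Omega_H^\Lambda))$, so that $u$ and $w=\J_\Omega u$ correspond to each other and testing against all $v\in\widetilde H^1_r(\Omega_H)$ with compact support is equivalent, after applying $\J_\Omega$ to the test functions as well, to testing against a dense set of $\alpha$-quasiperiodic test fields for a.e.\ $\alpha\in\Wast$. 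The key algebraic fact is that $\J_\Omega$ commutes with $\nabla_x$ (it acts only by periodization and a phase in $x_1$, not differentiation), and that it turns the nonlocal operator $T^+$ on $\Gamma_H$ into the family $T_\alpha^+$ on $\Gamma_H^\Lambda$: this is because $T^+$ is a Fourier multiplier in $x_1$ with symbol $\i\sqrt{k^2-|\xi|^2}$, and the Bloch transform block-diagonalizes such multipliers, sending the continuous Fourier variable $\xi$ to $\Lambda^*j+\alpha$, which is exactly the symbol appearing in \eqref{eq:perioDtN}. Parseval/Plancherel for the Bloch transform (the isometry statement in Theorem~\ref{th:BlochOmega} together with the corresponding statement for $\J_{\Gamma_H}$) then converts the $L^2(\Omega_H)$- and $L^2(\Gamma_H)$-pairings in \eqref{eq:varFormHEScal} into integrals over $\Wast$ of the corresponding quasiperiodic pairings; since the test functions are arbitrary, the integrand must vanish for a.e.\ $\alpha$, which is precisely \eqref{eq:heAlpha}. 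Because this is the scalar analogue of Theorem~9 in~\cite{Lechl2016}, I would mostly cite that argument and indicate the changes.

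For the second assertion, about the radiation-type representation \eqref{eq:URCAlpha}, I would proceed as follows. From the radiation condition \eqref{eq:URC} written with $H$ in place of $H_0$, the scattered field admits an angular spectrum representation; taking the trace on $\Gamma_H$ and applying the (one-dimensional) Bloch transform $\J_{\Gamma_H}$ in $x_1$ gives the Fourier coefficients of $w(\alpha,\cdot)|_{\Gamma_H^\Lambda}$ as samples of $\dhat u(\cdot,H)$ at the points $\Lambda^*j+\alpha$; this is the identity $\hat w(\alpha,j)=\dhat u(\Lambda^*j+\alpha,H)$. Plugging these coefficients into the Rayleigh-type expansion, i.e.\ propagating each mode $e^{\i(\Lambda^*j+\alpha)x_1}$ by the factor $e^{\i\beta(j)(x_2-H)}$ with $\beta(j)=\sqrt{k^2-|\Lambda^*j+\alpha|^2}$ (and absorbing the $H$-shift into the coefficients, as in \eqref{eq:URCAlpha}), yields an $\alpha$-quasiperiodic function that formally solves the Helmholtz equation term by term; each term with $|\Lambda^*j+\alpha|^2>k^2$ decays exponentially in $x_2$, so the series converges in $\widetilde H^1_r(\Omega_{H'}^\Lambda)$ for every $H'\ge H$ and every admissible $r<-1/2$, and differentiation term by term is justified. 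That it coincides with the Bloch transform of $u^s$ above $\Gamma_H$ follows from uniqueness of the Fourier/Rayleigh coefficients, and adding back $\J_\Omega u^i$ (which solves the Helmholtz equation in a neighborhood of $\Gamma$ by assumption) gives the claim for $w=\J_\Omega u$.

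The main obstacle I anticipate is the careful bookkeeping at the interface between the continuous Fourier transform on $\R$ and the Bloch/Fourier-series picture on $\W$: one must verify that $\J_{\Gamma_H}$ (the one-dimensional transform of Theorem~\ref{th:BlochR} applied on $\Gamma_H\cong\R$) interacts correctly with the Fourier multiplier $T^+$, i.e.\ that $\J_{\Gamma_H}\circ T^+ = T_\alpha^+\circ\J_{\Gamma_H}$ in the appropriate weighted spaces, and that the weights $r$ with $|r|<1$ permitted for $T^+$ in \eqref{eq:T} are compatible with the decay parameter $r<-1/2$ built into $H^s_\alpha(\Omega_H^\Lambda)$. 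Once this commutation and the mapping properties of $T_\alpha^+$ from $H^s_\alpha(\Gamma_H^\Lambda)$ to $H^{s+1}_\alpha(\Gamma_H^\Lambda)$ are in hand, the rest is a routine transcription of the argument for Theorem~9 in~\cite{Lechl2016} to the present scalar Dirichlet setting, together with the standard convergence estimate for Rayleigh expansions in weighted quasiperiodic Sobolev spaces.
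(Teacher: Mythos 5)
The paper offers no proof of this theorem at all: it only remarks that the equivalence ``can be shown along the lines of Theorem~9 in~\cite{Lechl2016}'', which is precisely the route you propose, supplemented by a correct sketch of the underlying mechanism (the Bloch transform commutes with $\nabla_x$ and block-diagonalizes the Fourier multiplier $T^+$ into the family $T_\alpha^+$ by sampling the symbol at $\xi=\Lambda^*j+\alpha$, Parseval turns the pairings in~\eqref{eq:varFormHEScal} into integrals over $\Wast$ whose integrand must vanish for a.e.\ $\alpha$, and the Rayleigh expansion~\eqref{eq:URCAlpha} comes from sampling the angular spectrum representation). Your proposal therefore takes essentially the same approach as the paper, and the sketch is sound.
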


\begin{remark}
  The normal derivative and the Bloch transform commute, such that we could replace $(\partial/\partial\nu) \J_\Omega u^i $ in~\eqref{eq:heAlpha} by $\J_{\Gamma_H} (\partial u^i/\partial\nu)$. 
\end{remark}

The compact embedding of $\widetilde{H}^1_\alpha(\Omega_H^\Lambda)$ in $L^2(\Omega_H^\Lambda)$ implies that $a_\alpha $ satisfies a G\r{a}rding inequality: 
If we abbreviate $\hat{v}(j) = \widehat{(v|_{\Gamma_H^\Lambda})}(j)$ for $j\in \Z$, then 
\begin{equation}\label{eq:aux414}
  \Re a_\alpha(v,v)  
  = \int_{\Omega_H^\Lambda} \left[ \left| \nabla v \right|^2 - k^2 |v|^2 \right] \d{x} 
  - \sum_{j\in\Z} \Re (\i\beta(j)(k,\alpha)) \, |\hat{v}(j)|^2,
\end{equation}
where $\Re (\i\beta(j)(k,\alpha)) \leq 0$ for all $j \in\Z$ is strictly positive if and only if $| \Lambda^* j + \alpha| > k$ and vanishes else.
Thus, existence of solution to~\eqref{eq:heAlpha} for fixed $\alpha \in \W$ follows from uniqueness. 
Uniqueness of solution in turn is well-known under the assumption that $\Gamma$ is graph of a Lipschitz continuous function, see~\cite[Cor.~3.4]{Elsch2002} or~\cite[Sec.~3.5]{Bonne1994}. 
The latter assumption is equivalent to assume that the diffeomorphism $\zeta$ defining $\Gamma$ satisfies $\zeta(x_1,0) = f(x_1)$ for some $\Lambda$-periodic Lipschitz continuous function on the real line. 
Under this assumption, the solution operators $A_\alpha: \, \big(\widetilde{H}^1_\alpha(\Omega_H^\Lambda)\big)^* \to \widetilde{H}^1_\alpha(\Omega_H^\Lambda)$, defined on the dual of $\widetilde{H}^1_\alpha(\Omega_H^\Lambda)$ by $A_\alpha F = w$ if $w \in \widetilde{H}^1_\alpha(\Omega_H^\Lambda)$ solves $a_\alpha(w,v) = F(v)$ for all $v \in \widetilde{H}^1_\alpha(\Omega_H^\Lambda)$, are invertible and moreover uniformly bounded in $\alpha \in \Wast$.   

\begin{lemma}\label{th:uniqueAlpha}
If $\Gamma$ is graph of a Lipschitz continuous function, then~\eqref{eq:heAlpha} is solvable for all $(k_\ast,\alpha) \in (0,\infty) \times \Wast$ and the solution operators $A_\alpha$ are uniformly bounded in $\alpha$.  
\end{lemma}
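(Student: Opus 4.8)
\emph{Strategy.} The plan is to obtain pointwise solvability of~\eqref{eq:heAlpha} by the usual Fredholm argument, and then to upgrade it to a bound on $\|A_\alpha\|$ that is uniform in $\alpha$ by first removing the $\alpha$-dependence of the function spaces through the substitution $w=e^{\i\alpha x_1}p$, and then exploiting norm-continuity of the resulting operator family in $\alpha$ together with compactness of $\overline{\Wast}$.

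\emph{Pointwise solvability.} Fix $(k,\alpha)\in(0,\infty)\times\Wast$. By~\eqref{eq:aux414} and $\Re(\i\beta(j))\le 0$ the form $a_\alpha$ satisfies the G\r{a}rding inequality $\Re a_\alpha(v,v)\ge\|v\|_{\widetilde H^1_\alpha(\Omega_H^\Lambda)}^2-(1+k^2)\|v\|_{L^2(\Omega_H^\Lambda)}^2$, so by the compact embedding $\widetilde H^1_\alpha(\Omega_H^\Lambda)\hookrightarrow L^2(\Omega_H^\Lambda)$ the operator induced by $a_\alpha$ is a Fredholm operator of index zero, hence boundedly invertible precisely when $a_\alpha(w,\cdot)=0$ forces $w=0$. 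For $\Gamma$ the graph of a $\Lambda$-periodic Lipschitz function this uniqueness is classical, see~\cite{Elsch2002,Bonne1994}; since replacing $\alpha$ by $\alpha+\Lambda^*$ leaves both the space $\widetilde H^1_\alpha(\Omega_H^\Lambda)$ and the form $a_\alpha$ unchanged (the symbols $\beta(j)$ are merely relabelled), uniqueness and solvability in fact hold for every $\alpha\in\R$, and $\alpha\mapsto A_\alpha$ is $\Lambda^*$-periodic.

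\emph{Reduction to a fixed space and uniform bound.} Multiplication $M_\alpha\colon p\mapsto e^{\i\alpha x_1}p$ is an isomorphism of the $\alpha$-independent periodic space $\widetilde H^1_0(\Omega_H^\Lambda)$ onto $\widetilde H^1_\alpha(\Omega_H^\Lambda)$ with $\|M_\alpha\|+\|M_\alpha^{-1}\|\le C$ uniformly for $\alpha\in\Wast$, because $|\alpha|\le\Lambda/2$. Writing $b_\alpha(p,q):=a_\alpha(M_\alpha p,M_\alpha q)$ and letting $B_\alpha$ be the induced operator on $\widetilde H^1_0(\Omega_H^\Lambda)$, one checks $A_\alpha F=M_\alpha B_\alpha^{-1}(F\circ M_\alpha)$, hence $\|A_\alpha\|\le C^2\|B_\alpha^{-1}\|$, and it suffices to bound $\|B_\alpha^{-1}\|$ uniformly in $\alpha$. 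The volume part of $b_\alpha$ is a second-degree polynomial in $\alpha$ with fixed operator coefficients, hence norm-continuous (indeed Lipschitz) in $\alpha$; the boundary part is $-\int_{\Gamma_H^\Lambda}\widetilde T_\alpha p\,\overline q$ with $\widetilde T_\alpha$ the periodic Fourier multiplier of symbol $\i\beta(j)$, and $\alpha\mapsto\widetilde T_\alpha$ is norm-continuous into the bounded operators because $t\mapsto\sqrt{k^2-t^2}$ (with the branch from~\eqref{eq:URC}) is continuous on $\R$, the symbol differences for large $|j|$ are $O(|\alpha-\alpha'|)$ uniformly in $j$ (an order-zero perturbation), and the finitely many indices with $|\Lambda^* j+\alpha|$ bounded contribute only a finite-rank remainder. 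Thus $\alpha\mapsto B_\alpha$ is norm-continuous on $\R$; each $B_\alpha$ is invertible by the previous step, and since inversion is continuous on the open set of invertible operators, $\alpha\mapsto B_\alpha^{-1}$ is norm-continuous on $\R$, hence bounded on the compact set $\overline{\Wast}=[-\Lambda/2,\Lambda/2]$. This gives $\sup_{\alpha\in\Wast}\|A_\alpha\|\le C^2\sup_{\alpha\in\overline{\Wast}}\|B_\alpha^{-1}\|<\infty$.

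\emph{Main obstacle.} The point that must be checked with care is the norm-continuity of $\alpha\mapsto\widetilde T_\alpha$ (equivalently $T_\alpha^+$) at the Rayleigh values of $\alpha$ at which $|\Lambda^* j+\alpha|=k$ for some $j$: there $t\mapsto\sqrt{k^2-t^2}$ is only $\tfrac12$-H\"older, so one obtains a H\"older rather than Lipschitz modulus of continuity for $\alpha\mapsto B_\alpha$ — amply sufficient here, but it is what prevents a completely elementary estimate. Should one prefer to avoid the explicit symbol analysis, an equivalent route is a contradiction argument: if $\|A_{\alpha_n}\|\to\infty$, choose $F_n$ with $\|F_n\|=1$ and $\|w_n\|:=\|A_{\alpha_n}F_n\|\to\infty$, set $\widetilde w_n=w_n/\|w_n\|$, transport to the fixed space via $M_{\alpha_n}^{-1}$ and pass to a subsequence with $\alpha_n\to\alpha_0\in\overline{\Wast}$, then use the G\r{a}rding inequality and the compact embedding to extract a subsequence converging weakly in $\widetilde H^1_0(\Omega_H^\Lambda)$ and strongly in $L^2(\Omega_H^\Lambda)$; its limit solves the homogeneous $\alpha_0$-problem and hence vanishes by uniqueness, whereupon the G\r{a}rding inequality contradicts $\|\widetilde w_n\|=1$.
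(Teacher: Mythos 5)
Your proposal is correct and follows essentially the same route as the paper: pointwise solvability via the G\r{a}rding inequality, Fredholm theory and the classical uniqueness result for Lipschitz graphs, followed by continuity of the operator family in $\alpha$ (smooth symbols for large $|j|$, H\"older continuity of the finitely many square roots near the Rayleigh values, operator-norm convergence of the Dirichlet-to-Neumann series) and compactness of $\overline{\Wast}$ to get the uniform bound. Your conjugation by $e^{\i\alpha x_1}$ to work in a fixed, $\alpha$-independent space is a welcome piece of extra care that the paper's proof leaves implicit, but it does not change the substance of the argument.
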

\begin{proof}
The sesquilinear form $a_\alpha $ merely depends on $\alpha$ via the exterior Dirichlet-to-Neumann operator,
\[
  (w,v) \mapsto \int_{\Gamma_H^\Lambda} T_\alpha^+ w \, \overline{v} \dS 
  = \i \sum_{j\in\Z} \sqrt{k^2 - |\Lambda^* j + \alpha|^2} \, \hat{w}(\alpha,j) \overline{\hat{v}(j)},
\]
which is a $\Lambda^*$-periodic function in $\alpha$ and, moreover, continuous in $\alpha$, as we show now:  
For $j\in\Z$ such that $|\Lambda^* j + \alpha| > 2k$ for all $\alpha \in \Wast$, the square root functions $\alpha \mapsto \sqrt{k^2 - | \Lambda^* j + \alpha |^2}$ are infinitely smooth.  
For the remaining finitely many $j$, the finitely many functions $t \mapsto \sqrt{k^2 - |\Lambda^* j + \alpha|^2}$ are all H\"older continuous (with same H\"older exponent).
Convergence of the series defining $T_\alpha^+$ in the operator norm of $\mathcal{L}(H^{1/2}_\alpha(\Gamma_H^\Lambda),H^{-1/2}_\alpha(\Gamma_H^\Lambda))$ (which is independent of $\alpha$) ensures that $(w(\alpha,\cdot),v) \mapsto (T_\alpha^+ w(\alpha, \cdot),v)$ and $a_\alpha $ both depend continuously on $\alpha \in \Wast$. 
Consequently, the above-defined solution operator $A_\alpha$ depends continuously on $\alpha \in \Wast$, such that its inverse $A_\alpha^{-1}$ depends continuously on $\alpha$ as well. 
In particular, the operator norms $\| A_\alpha^{-1} \|$ are uniformly bounded in $\alpha \in \Wast$.  
\end{proof}

We next show a regularity result for the solution $w$ to the periodic problem~\eqref{eq:heAlpha} in $\alpha$ that in particular implies that $\alpha \mapsto w(\alpha, \cdot)$ is continuous. 
To this end, we introduce the spaces 
\[
  W^{1,p}_\p(\Wast; \widetilde{H}^1_\alpha(\Omega_H^\Lambda)), \qquad 1 \leq p < \infty, 
\]
as restrictions to $\Wast \times \Omega_H^\Lambda$ of those distributions in $\mathcal{D}_{\Lambda}'(\R^2)$ for which the following norm is finite, 
\[
  \| w \|_{W^{1,p}_\p(\Wast; \widetilde{H}^1_\alpha(\Omega_H^\Lambda))}^p 
  = \int_{\Wast} \left[ \| w(\alpha,\cdot) \|_{\widetilde{H}^1_\alpha(\Omega_H^\Lambda))}^p  + \| \partial_\alpha w(\alpha,\cdot) \|_{\widetilde{H}^1_\alpha(\Omega_H^\Lambda))}^p \right] \d{\alpha}.
\]
% If $w \in W^{1,p}_\p(\Wast; \widetilde{H}^1_\alpha(\Omega_H^\Lambda))$, then well-known trace results for Sobolev spaces imply that $w|_{\Gamma_H^\Lambda}$ belongs to $W^{1,p}_\p(\Wast; H^{1/2}_\alpha(\Gamma_H^\Lambda))$.

\begin{theorem} \label{th:exiSolScalPerio}
Assume that $\Gamma$ is graph of a Lipschitz continuous function.\\[1mm]
(a) If $u^i \in H^1(\Omega_H)$, then the solution $w=w(\alpha,\cdot)$ to~\eqref{eq:heAlpha} belongs to $L^2(\Wast; \widetilde{H}^1_\alpha(\Omega_H^\Lambda))$ and the solution $u = \J_\Omega^{-1} w$ to~\eqref{eq:varFormHEScal} belongs to $H^{1}(\Omega_H)$.\\[1mm] 
(b) If $u^i \in H^1(\Omega_H)$ satisfies for some $p \in [1,2)$ that 
\begin{equation}\label{eq:assSource}
  \J_\Omega u^i \in W^{1,p}_\p(\Wast; H^1_\alpha(\Omega_H^\Lambda))
  \quad \text{and} \quad 
  \sup_{\alpha \in \Wast} \| \J_\Omega u^i(\alpha, \cdot) \|_{H^1_\alpha(\Omega_H^\Lambda)} < \infty, 
\end{equation}
then the solution $w=w(\alpha,\cdot)$ to~\eqref{eq:heAlpha} belongs to $W^{1,p}_\p(\Wast; \widetilde{H}^1_\alpha(\Omega_H^\Lambda))$.
\end{theorem}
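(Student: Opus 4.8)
The plan is to rewrite~\eqref{eq:heAlpha} as the operator equation $w(\alpha,\cdot)=A_\alpha F_\alpha$, where $A_\alpha$ is the $\alpha$-uniformly bounded solution operator from Lemma~\ref{th:uniqueAlpha} and $F_\alpha\in\big(\widetilde H^1_\alpha(\Omega_H^\Lambda)\big)^*$ is the functional on the right of~\eqref{eq:heAlpha}; write $\psi_\alpha:=\J_\Omega u^i(\alpha,\cdot)$ and note that $\J_{\Gamma_H}u^i(\alpha,\cdot)|_{\Gamma_H^\Lambda}$ is the trace of $\psi_\alpha$. For~(a), the key estimate is the $\alpha$-uniform bound $\|F_\alpha\|_{(\widetilde H^1_\alpha(\Omega_H^\Lambda))^*}\le C\|\psi_\alpha\|_{H^1_\alpha(\Omega_H^\Lambda)}$. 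For the $T_\alpha^+$-term this follows from boundedness of the trace $H^1_\alpha(\Omega_H^\Lambda)\to H^{1/2}_\alpha(\Gamma_H^\Lambda)$, the $\alpha$-independent operator norm of $T_\alpha^+\colon H^{1/2}_\alpha(\Gamma_H^\Lambda)\to H^{-1/2}_\alpha(\Gamma_H^\Lambda)$ already invoked in the proof of Lemma~\ref{th:uniqueAlpha}, and the duality pairing with $v|_{\Gamma_H^\Lambda}$. The normal-derivative term I would handle, without assuming more than $u^i\in H^1$, by a cut-off argument: since $\Gamma\subset\{x_2<H_0\}$, $\psi_\alpha$ is an $\alpha$-quasiperiodic Helmholtz solution in a neighborhood of $\Gamma_H^\Lambda$, so choosing $\chi\in C^\infty(\overline{\Omega_H})$ with $\chi\equiv1$ near $\{x_2=H\}$ and $\chi\equiv0$ for $x_2\le H_1$ (some $H_0<H_1<H$), Green's identity on $\{H_1<x_2<H\}\cap\Omega_H^\Lambda$ (lateral terms cancelling by quasiperiodicity, the bottom term vanishing) gives
\[
  \int_{\Gamma_H^\Lambda}\partial_\nu\psi_\alpha\,\overline v\dS
  =\int_{\{H_1<x_2<H\}\cap\Omega_H^\Lambda}\big[\nabla\psi_\alpha\cdot\nabla\overline{(\chi v)}-k^2\psi_\alpha\overline{\chi v}\big]\d{x},
\]
which is bounded by $C\|\psi_\alpha\|_{H^1_\alpha(\Omega_H^\Lambda)}\|v\|_{H^1_\alpha(\Omega_H^\Lambda)}$. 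Together with Lemma~\ref{th:uniqueAlpha} this yields $\|w(\alpha,\cdot)\|_{H^1_\alpha}\le C\|\psi_\alpha\|_{H^1_\alpha}$ for a.e.\ $\alpha$; squaring, integrating over $\Wast$, and using that $\J_\Omega$ is an isomorphism $H^1(\Omega_H)\to L^2(\Wast;H^1_\alpha(\Omega_H^\Lambda))$ (Theorem~\ref{th:BlochOmega} with $s=1$, $r=0$; measurability of $\alpha\mapsto w(\alpha,\cdot)$ follows from continuity of $\alpha\mapsto A_\alpha$ and measurability of $\alpha\mapsto\psi_\alpha$) shows $w\in L^2(\Wast;\widetilde H^1_\alpha(\Omega_H^\Lambda))$. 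Theorem~\ref{th:equiScalarPerio} then identifies $u=\J_\Omega^{-1}w\in\widetilde H^1_0(\Omega_H)\subset H^1(\Omega_H)$ as a solution of~\eqref{eq:varFormHEScal}.

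For~(b) I would differentiate $a_\alpha(w(\alpha,\cdot),v)=F_\alpha(v)$ in $\alpha$, obtaining
\[
  \partial_\alpha w(\alpha,\cdot)=A_\alpha\big[\partial_\alpha F_\alpha-(\partial_\alpha a_\alpha)(w(\alpha,\cdot),\cdot)\big].
\]
The $\alpha$-dependence of $a_\alpha$, and of the $T_\alpha^+$-part of $F_\alpha$, is governed entirely by the scalar symbols $\beta(j)=\sqrt{k^2-|\Lambda^*j+\alpha|^2}$ of the Dirichlet-to-Neumann map (cf.\ the proof of Lemma~\ref{th:uniqueAlpha}). These are real-analytic on $\Wast$ away from the finite set $\Sigma=\{\alpha\in\Wast:\,|\Lambda^*j+\alpha|=k\text{ for some }j\in\Z\}$ of cut-off values; near a point $\alpha_0\in\Sigma$ the critical symbol behaves like $\sqrt{|\alpha-\alpha_0|}$, hence its $\alpha$-derivative like $|\alpha-\alpha_0|^{-1/2}$, while the infinitely many large-$|j|$ terms contribute only a bounded, one-order-smoothing part. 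Consequently $\|\partial_\alpha a_\alpha\|_{\mathcal{L}(\widetilde H^1_\alpha,(\widetilde H^1_\alpha)^*)}\le g(\alpha)$ and $\|\partial_\alpha F_\alpha\|_{(\widetilde H^1_\alpha)^*}\le C\|\partial_\alpha\psi_\alpha\|_{H^1_\alpha}+g(\alpha)\,\|\psi_\alpha\|_{H^1_\alpha}$ for some $g$ lying in $L^q(\Wast)$ for every $q\in[1,2)$ but not for $q=2$ — which is exactly why the statement is limited to $p<2$. Using the $\alpha$-uniform bound on $\|A_\alpha\|$, the bound $\|w(\alpha,\cdot)\|_{H^1_\alpha}\le C\|\psi_\alpha\|_{H^1_\alpha}\le CM$ from~(a) with $M:=\sup_\alpha\|\psi_\alpha\|_{H^1_\alpha}<\infty$ from~\eqref{eq:assSource}, and $g\in L^q\subset L^p$ for $p\in[1,2)$, one gets
\[
  \|\partial_\alpha w(\alpha,\cdot)\|_{\widetilde H^1_\alpha(\Omega_H^\Lambda)}\le C\big(\|\partial_\alpha\psi_\alpha\|_{H^1_\alpha}+M\,g(\alpha)\big)=:G(\alpha)\in L^p(\Wast).
\]

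The remaining task is to legitimize the differentiation, and this — together with the careful accounting of the square-root branch points — is the main obstacle. On each of the finitely many open subintervals of $\Wast\setminus\Sigma$ the form $a_\alpha$ depends real-analytically on $\alpha$ and $A_\alpha$ is an analytic family of bounded isomorphisms, while $\alpha\mapsto\psi_\alpha$ is absolutely continuous by the one-dimensional Sobolev embedding applied to~\eqref{eq:assSource}; hence $\alpha\mapsto w(\alpha,\cdot)=A_\alpha F_\alpha$ is absolutely continuous there with classical $\alpha$-derivative given by the above expression. To produce a genuine weak derivative on the torus $\Wast$ one tests against smooth periodic test functions $\phi$, integrates by parts over $\Wast$ minus $\epsilon$-neighborhoods of $\Sigma$, and lets $\epsilon\to0$: the boundary contributions vanish because $w$ is continuous and bounded near $\Sigma$ (from~(a) and~\eqref{eq:assSource}) while $G\in L^1(\Wast)$ near $\Sigma$. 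Thus the displayed formula defines $\partial_\alpha w\in L^p(\Wast;\widetilde H^1_\alpha(\Omega_H^\Lambda))$, which with~(a) gives $w\in W^{1,p}_\p(\Wast;\widetilde H^1_\alpha(\Omega_H^\Lambda))$. The delicate quantitative point is to confirm that the branch points contribute only an $|\alpha-\alpha_0|^{-1/2}$-type singularity to $\partial_\alpha T_\alpha^+$, hence to $\partial_\alpha a_\alpha$ and $\partial_\alpha F_\alpha$, so that these lie in $L^q$ for $q<2$ but genuinely fail to lie in $L^2$; an equivalent and perhaps tidier route is to bound the difference quotients $h^{-1}(a_{\alpha+h}-a_\alpha)$ and $h^{-1}(F_{\alpha+h}-F_\alpha)$ by a fixed $L^q$-function uniformly in $h$ and invoke dominated convergence.
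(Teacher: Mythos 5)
Your proposal is correct and follows essentially the same route as the paper: for (a), uniform bounds on $A_\alpha^{-1}$ and on the data functional (trace theorem plus $\alpha$-uniform boundedness of $T_\alpha^+$) followed by integration over $\Wast$ and the Bloch isomorphism; for (b), differentiation of the variational identity in $\alpha$, splitting the right-hand side into $\partial_\alpha F_\alpha$ and $-(\partial_\alpha a_\alpha)(w,\cdot)$, isolating the $|k^2-|\Lambda^* j+\alpha|^2|^{-1/2}$ singularities at the finitely many cut-off values, and integrating their $p$-th power, which is finite precisely for $p<2$. The only substantive variations are that you estimate $\partial_\alpha T_\alpha^+$ as a bounded operator for the large-$|j|$ tail (using $|\Lambda^* j+\alpha|/|k^2-|\Lambda^* j+\alpha|^2|^{1/2}\to 1$) where the paper instead invokes elliptic regularity of the radiating extension of $w$ to get rapid decay of the Fourier coefficients $\hat w(\alpha,j)$, and that you are more explicit than the paper in legitimizing the formal $\alpha$-differentiation via piecewise analyticity away from the cut-off set and a limiting integration-by-parts argument — both variations are sound.
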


\begin{remark}\label{rem:2}
If $u^i \in H^1_r(\Omega_H)$ with $1/2<r<1$, then $\mathcal{J}_\Omega u^i$ belongs to $H^r_\p(\Wast; H^1_\alpha(\Omega_H^\Lambda))$ by Theorem~\ref{th:BlochR}. 
Sobolev's embedding result for functions defined in $\Wast \times \Gamma_H^\Lambda$ hence shows that $\alpha \mapsto \mathcal{J}_{\Gamma_H} u^i|_{\Gamma_H}(\alpha,\cdot)$ is continuous from $\Wast$ into $H^{1/2}_\alpha(\Gamma_H^\Lambda)$. 
In particular, $\max_{\alpha \in \Wast} \| \mathcal{J}_\Omega u^i(\alpha,\cdot) \|_{H^1_\alpha(\Omega_H^\Lambda)} < \infty$ then is finite and~\eqref{eq:assSource} is satisfied. 
This applies for instance if $u^i$ is the Dirichlet Green's function $G(\cdot,y)$ for the half-space, defined by  
\begin{equation}\label{eq:greenFunc}
  G(x,y) = \frac{\i}{4} \left[ H^{(1)}_0(k|x-y|) - H^{(1)}_0(k|x-y'|) \right], \quad x \not = y \in \R^2,
\end{equation}
for a source point $y \in \Omega$ with reflection $y'=(y_1,-y_2)^\top$ at $\Gamma_0$. 
Due to this mirrored point, $G(\cdot,y)$ decays faster than the single point source as $|x_1|\to\infty$. 
Indeed, $G(\cdot,y)$, as well as all its first-order partial derivatives, decays as $(1+|x_1|^2)^{-3/4}$ when $|x_1| \to \infty$, see~\cite{Chand1996}. 
Hence, for $y =(y_1,y_2)^\top$ with $y_2>0$ and $r<1$, $G(\cdot,y)$ satisfies that 
\begin{equation}\label{eq:GNorm}
  \int_{\R \times [0,H]} (1+|x_1|^2)^{r} \left[ |G(x,y)|^2 + |\nabla_x G(x,y)|^2 \right] \d{x} 
  \leq C \int_{\R \times [0,H]} (1+|x_1|^2)^{r-3/2} \d{x}  < \infty
\end{equation}
Note further that also incident Gaussian beams, modeled by incident downwards-traveling Herglotz wave functions, 
\[
  v_g(x) = \int_{-\pi/2}^{\pi/2} e^{\i k \, [\sin(\vartheta) \, x_1 - \cos(\vartheta) \, x_2]} g(\vartheta) \d{\vartheta},
  \quad x \in \R^2, 
\]
belong to the weighted spaces $H^1_r(\Omega_H)$ under suitable conditions on $g$, see~\cite{Lechl2016, Lechl2015e}.
\end{remark}

\begin{proof}
(a) Recall that the normal derivative on $\Gamma_H^\Lambda$ of the solution $\J_\Omega u^i(\alpha,\cdot)$ of the quasiperiodic Helmholtz equation in $\Omega_H^\Lambda$ satisfies the trace bound $\| (\partial [\J_\Omega u^i (\alpha,\cdot)]/\partial \nu) \|_{H^{-1/2}_\alpha(\Gamma_H^\Lambda)} \leq C \| \J_\Omega u^i(\alpha,\cdot) \|_{H^1_\alpha(\Omega_H^\Lambda)}$ for some $C>0$ independent of $\alpha \in \Wast$. 
The uniform bound of the inverses $\| A_\alpha^{-1} \|$ shown in Lemma~\ref{th:uniqueAlpha} together with the trace theorem in $H^1_\alpha(\Omega_H^\Lambda)$ hence implies for the solutions $w(\alpha,\cdot)$ to~\eqref{eq:heAlpha} that 
\begin{align}
  \| w & \|_{L^2(\W^*; \widetilde{H}^1_\alpha(\Omega_H^\Lambda))}^2  
  = \int_\Wast \| w(\alpha,\cdot) \|_{\widetilde{H}^1_\alpha(\Omega_H^\Lambda)}^2 \d{\alpha} \nonumber \\
  & \quad \leq C \int_\Wast \left[ \left\| \frac{\partial }{\partial \nu}\J_{\Gamma_H} u^i (\alpha,\cdot) \right\|_{H^{-1/2}_\alpha(\Gamma_H^\Lambda)}^2 + \| T^+_\alpha \J_{\Gamma_H} u^i(\alpha,\cdot) \|_{H^{-1/2}_\alpha(\Gamma_H^\Lambda)}^2 \right]\d{\alpha} \label{eq:aux500} \\
  & \quad\leq C \int_\Wast \left[ \| \J_\Omega u^i(\alpha,\cdot) \|_{H^1_\alpha(\Omega_H^\Lambda)}^2 + \| \J_{\Gamma_H} u^i(\alpha,\cdot) \|_{H^{1/2}_\alpha(\Gamma_H^\Lambda)}^2 \right]\d{\alpha} 
  \leq C \| u^i \|_{H^1(\Omega_H)}^2, \nonumber
\end{align}
where we exploited that the operator norm of $T_\alpha$ can be uniformly bounded in $\alpha\in \Wast$ by $\sup_{j\in\Z}\big[ \big|k^2-|\Lambda^* j-\alpha|^2 \, \big|/|1+|j|^2| \big]^{1/2} \leq C(k)$. 

(b) 
%For $f \in H^{-1/2}_1(\Gamma)$ there holds that $\J_{\Gamma} f \in H^1_\p(\Wast; H^{-1/2}_\alpha(\Gamma_\Lambda))$. 
The assumed bound on the right-hand side of~\eqref{eq:assSource} implies by the uniform bound for the inverses $A_\alpha^{-1}$ shown in Lemma~\ref{th:uniqueAlpha} that for arbitrary $\alpha\in\Wast$ there holds 
\[
  \| w(\alpha,\cdot) \|_{\widetilde{H}^1_\alpha(\Omega_H^\Lambda)} 
  % \leq C \left[\left\| \frac{\partial }{\partial \nu}\J_{\Gamma_H} u^i (\alpha,\cdot) \right\|_{H^{-1/2}_\alpha(\Gamma_H^\Lambda)} + \| \J_{\Gamma_H} u^i (\alpha,\cdot) \|_{H^{1/2}_\alpha(\Gamma_H^\Lambda)} \right]
  \leq C \| \J_\Omega u^i(\alpha,\cdot) \|_{H^1_\alpha(\Omega_H^\Lambda)}^2
\]
for some constant $C>0$ independent of $\alpha \in \Wast$, see~\eqref{eq:aux500}.  
Additionally, the radiating quasiperiodic extension of $w(\alpha,\cdot)$ to all of $\Omega$ by~\eqref{eq:URCAlpha} satisfies the Helmholtz equation $\Delta w + k^2 w = 0$ in $\Omega$ and is hence a smooth function in $\Omega$ by standard elliptic regularity results, see, e.g.,~\cite{McLea2000}. 
In particular, $w(\alpha,\cdot)|_{\Gamma_H^\Lambda}$ belongs to all Sobolev spaces $H^s_\alpha(\Gamma_H^\Lambda)$ and satisfies the bounds
\begin{equation}\label{eq:regW}
  \sum_{j\in\Z} (1+|j|^2)^s |\hat{w}(\alpha,j)|^2 
  \| w(\alpha,\cdot)|_{\Gamma_H^\Lambda} \|_{H^s_\alpha(\Gamma_H^\Lambda)}^2
  \leq C(s)^2 \| \J_\Omega u^i(\alpha,\cdot) \|_{H^1_\alpha(\Omega_H^\Lambda)}^2
  \text{ for all $s \geq 0$.} 
\end{equation}
(Again, $\hat{w}(\alpha,j)$ denotes the $j$th Fourier coefficient of the restriction of $w(\alpha,\cdot)$ to $\Gamma_H^\Lambda$.)
The latter bounds imply existence of a constant $C(s)$ such that 
\begin{equation}\label{eq:regW2}
  |\hat{w}(\alpha,j)| \leq C(s) (1+|j|^2)^{-s} \| \J_\Omega u^i(\alpha,\cdot) \|_{H^1_\alpha(\Omega_H^\Lambda)} 
  \quad \text{for all } j\in\Z.
\end{equation}

Let us now abbreviate the variational formulation~\eqref{eq:heAlpha} for $w(\alpha,\cdot)$ by $a_\alpha(w(\alpha,\cdot),v) = F_\alpha(v)$ for all $v \in \widetilde{H}^1_\alpha(\Omega_H^\Lambda)$. 
The solutions $w(\alpha,\cdot)$ to~\eqref{eq:heAlpha} then possess a (formal) derivative $w' = \partial_{\alpha} w(\alpha,\cdot)$ with respect to $\alpha$ that satisfies $a_\alpha(w',v) = F_{\alpha}^{(1)} (v) + F_{\alpha}^{(2)}(v)$ for all $v\in \widetilde{H}^1_\alpha(\Omega_H^\Lambda)$, with right-hand sides 
\begin{align}
   F_{\alpha}^{(2)}(v)
  & = \i \sum_{j\in\Z} \frac{\Lambda^* j + \alpha}{\sqrt{k^2 - |\Lambda^* j + \alpha |^2}} \, \hat{w}(\alpha,j) \, \overline{\hat{v}(j)}, \label{eq:RHSDeriAlpha}
\end{align}
where $\hat{v}(j)$ denotes the $j$th Fourier coefficient of the restriction of $v$ to $\Gamma_H^\Lambda$, see~\eqref{eq:perioDtN}, and 
\begin{align}
  F_{\alpha}^{(1)} (v) 
  & = \partial_{\alpha} F_\alpha(v) 
  % = \int_{\Gamma_H^\Lambda} \partial_{\alpha} \left[ \frac{\partial }{\partial \nu} \J_\Omega u^i (\alpha,\cdot)- T_\alpha^+ \left[\J_{\Gamma_H} u^i (\alpha,\cdot)\big|_{\Gamma_H^\Lambda} \right] \right] \, \overline{v} \dS  \\ & 
  = \int_{\Gamma_H^\Lambda} \left[ \frac{\partial }{\partial \nu} \partial_{\alpha} \J_\Omega u^i (\alpha,\cdot)- T_\alpha^+ \left[ \big[\partial_{\alpha}  \J_{\Omega} u^i (\alpha,\cdot) \big] \big|_{\Gamma_H^\Lambda} \right] \right]\, \overline{v} \dS \label{eq:aux541}\\
  & \hspace*{5cm} + \i \sum_{j\in\Z} \frac{(\Lambda^* j + \alpha)_{1,2}}{\sqrt{k^2 - |\Lambda^* j  + \alpha |^2}} \, \widehat{(\J_{\Omega} u^i)}(\alpha,j) \, \overline{\hat{v}(j)}, \label{eq:aux542}
\end{align}
where $\widehat{(\J_{\Omega} u^i)}(\alpha,j)$ denotes the $j$th Fourier coefficient of $\J_{\Omega} u^i(\alpha,\cdot)|_{\Gamma_\Lambda^H}$ as defined in~\eqref{eq:perioDtN}.

We show next that the right-hand side $F_{\alpha}^{(2)}$ is well-defined and bounded for all $\alpha \in \Wast \setminus E_{\Lambda^*}$ where $E_{\Lambda^*} = \{ \alpha \in \Wast: \, |\Lambda^* j  + \alpha | = k$ for some $j \in \Z \}$.
Indeed, as the equation $|\Lambda^* j + \alpha | = k$ can only be satisfied for finitely many $j \in\Z$; a necessary condition is that $j \in I_1 = \{ j\in\Z: \, |\Lambda^* j| \leq k + |\Lambda^*| \}$.
Thus, $E_{\Lambda^*}$ consists of finitely many points in the interval $\Wast$. 
As, moreover, 
\[
  \frac{\big| (\Lambda^* j + \alpha)_{1,2} \big|}{| k^2 - |\Lambda^* j + \alpha|^2 |^{1/2}} 
  \leq 
  \frac{|\Lambda^* j + \alpha|}{| k^2 - |\Lambda^* j + \alpha|^2 |^{1/2}} 
  \to 1  
  \quad \text{as } |j| \to \infty, 
\]
we conclude by~\eqref{eq:regW2} with $s=2$ and the continuity estimate $|\hat{v}(j)| \leq C\|v \|_{\widetilde{H}^1_\alpha(\Omega_H^\Lambda)}$ that for all $v\in \widetilde{H}^1_\alpha(\Omega_H^\Lambda)$ with $\| v \|_{\widetilde{H}^1_\alpha(\Omega_H^\Lambda)} = 1$ and all $\alpha \in \Wast \setminus E_{\Lambda^*}$ there holds that  
\begin{align}
  \big| F_{\alpha}^{(2)} (v) \big|
  &\leq 
  C \sum_{j\in\Z} \frac{|\Lambda^* j + \alpha| \, (1+|j|^2)^{-2} }{| k^2 - |\Lambda^* j + \alpha|^2 |^{1/2}} \| \J_{\Gamma_H} u^i(\alpha,\cdot) \|_{H^{1/2}_\alpha(\Gamma_H^\Lambda)} \nonumber \\
  & 
  \leq 
  C  \bigg(\sum_{j\in I_1} + \sum_{j\in \Z \setminus I_1}\bigg) \frac{|\Lambda^* j + \alpha| \, (1+|j|^2)^{-2} }{| k^2 - |\Lambda^* j + \alpha|^2 |^{1/2}} \| \J_\Omega u^i(\alpha,\cdot) \|_{H^1_\alpha(\Omega_H^\Lambda)} \label{eq:aux564}\\
  & 
  \leq 
  \bigg[ C_1(\Lambda,k) + C_2(\Lambda,k) \sum_{j: \, |\Lambda^* j| \leq k + |\Lambda^*|}  \frac{1}{| k^2 - |\Lambda^* j + \alpha|^2 |^{1/2}} \bigg] \| \J_\Omega u^i(\alpha,\cdot) \|_{H^1_\alpha(\Omega_H^\Lambda)} \nonumber
\end{align}
for constants $C_{1,2}$ depending on $\Lambda$ and $k$ but independent of $\alpha$ and $u^i$.

Concerning $F_{\alpha}^{(1)}$, let us first note that $u^i$ is by assumption a twice continuously differentiable solution to the Helmholtz equation in $\Omega$, and hence a real-analytic function, such that $\J_\Omega u^i(\alpha,\cdot)$ belongs to all Sobolev spaces $H^s_\alpha(\Omega_H^\Lambda)$ for arbitrary $s \geq 0$.  
Consequently, the bounds~\eqref{eq:regW2} hold analogously for $\J_\Omega u^i(\alpha,\cdot)$ instead of $w(\alpha,\cdot)$, i.e.,
\begin{equation}\label{eq:regW2NochMal}
  |\widehat{(\J_{\Omega} u^i)}(\alpha,j)| \leq C(s) (1+|j|^2)^{-s} \| \J_\Omega u^i(\alpha,\cdot) \|_{H^1_\alpha(\Omega_H^\Lambda)} 
  \quad \text{for all } j\in\Z.
\end{equation}
In particular, the third term of $F_{\alpha}^{(1)}$ from~\eqref{eq:aux542} is well-defined and bounded in $v$ for all $\alpha \in \Wast \setminus E_{\Lambda^*}$ by the same arguments we used for $F_{\alpha}^{(2)}$. 
In particular, this third term from~\eqref{eq:aux542} can also be bounded as in~\eqref{eq:aux564}. 
Further, the first two terms of $F_{\alpha}^{(1)}$ from~\eqref{eq:aux541} are clearly well-defined and bounded in $v$ as $\partial_{\alpha} \J_{\Omega} u^i(\alpha,\cdot)$ belongs to $H^1_\alpha(\Omega_\Lambda^H)$ due to~\eqref{eq:assSource}.  

As the right-hand sides $F_{\alpha}^{(1,2)}$ are hence well-defined in $\big(\widetilde{H}^1_\alpha(\Omega_H^\Lambda)\big)^*$ for all $\alpha \in \Wast$ except finitely many point in $E_{\Lambda^*}$, and as the solutions operators $A_\alpha^{-1}$ are uniformly bounded in $\alpha \in \Wast$, we conclude that $\| w'(\alpha,\cdot) \|_{\widetilde{H}^1_\alpha(\Omega_H^\Lambda)} \leq C \big[ \| F_{\alpha}^{(1)} \| + \| F_{\alpha}^{(2)} \| \big]$ for all $\alpha \in \Wast \setminus E_{\Lambda^*}$.
Now, recall from part (a) of this proof the trace bound 
\[
  \bigg\| \frac{\partial }{\partial \nu} \big[\partial_{\alpha} \J_\Omega u^i (\alpha,\cdot) \big] \bigg\|_{H^{-1/2}_\alpha(\Gamma_H^\Lambda)}
  = \bigg\| \partial_{\alpha}  \frac{\partial }{\partial \nu} \left[ \J_\Omega u^i (\alpha,\cdot) \right]\bigg\|%_{H^{-1/2}_\alpha(\Gamma_H^\Lambda)}
  \leq C \| \partial_{\alpha} \J_\Omega u^i (\alpha,\cdot) \|_{H^1_\alpha(\Omega_H^\Lambda)}, 
\]
the uniform boundedness of the operator norms of $T_\alpha^+: \, H^{1/2}_\alpha(\Gamma_H^\Lambda) \to H^{-1/2}_\alpha(\Gamma_H^\Lambda)$, and the trace theorem stating that $\| \partial_{\alpha} \J_{\Gamma_H} u^i (\alpha,\cdot) \|_{H^{-1/2}_\alpha(\Gamma_H^\Lambda)} \leq C \| \partial_{\alpha} \J_\Omega u^i (\alpha,\cdot) \|_{H^1_\alpha(\Omega_H^\Lambda)}$, again with a uniform constant in $\alpha \in \Wast$. 
Thus, the estimate  $(a_1+ \dots+a_n)^p \leq n^{p-1} (a_1^p+ \dots+a_n^p) $ for non-negative numbers $a_1, \dots, a_n$ and $p\geq1$ due to Jensen's inequality implies that 
\begin{align*}
  \big\| F_{\alpha}^{(1)} + F_{\alpha}^{(2)} \big\|_{\big(\widetilde{H}^1_\alpha(\Omega_H^\Lambda)\big)^*}^p
  \leq C \left[ \| \partial_{\alpha} \J_\Omega u^i (\alpha,\cdot) \|_{H^1_\alpha(\Omega_H^\Lambda)}^p +  \sum_{j \in I_1}  \frac{\| \J_\Omega u^i(\alpha,\cdot) \|_{H^1_\alpha(\Omega_H^\Lambda)}^p}{| k^2 - |\Lambda^* j + \alpha|^2 |^{p/2}}  \right],
\end{align*}
for $C>0$ independent of $\alpha\in\Wast \setminus E_{\Lambda^*}$. 
Consequently, 
\begin{align}
  \| w' &  \|_{L^p(\Wast; \widetilde{H}^1_\alpha(\Omega_H^\Lambda))}^p  
  = 
  \int_\Wast \| w' (\alpha, \cdot ) \|_{\widetilde{H}^1_\alpha(\Omega_H^\Lambda)}^p \d{\alpha} \nonumber \\
  & \leq C \int_\Wast \left[ \big\| F_{\alpha}^{(1)} + F_{\alpha}^{(2)} \big\|_{\big(\widetilde{H}^1_\alpha(\Omega_H^\Lambda)\big)^*}^p \right] \d{\alpha} 
  % & \leq C \int_\Wast \bigg[ \| \partial_{\alpha} \J_\Omega u^i (\alpha,\cdot) \|_{H^1_\alpha(\Omega_H^\Lambda)}^p +  \sum_{j: \, |\Lambda^* j| \leq k + |\Lambda^*|}  \frac{1}{| k^2 - |\Lambda^* j - \alpha|^2 |^{p/2}} \| \J_\Omega u^i(\alpha,\cdot) \|_{H^1_\alpha(\Omega_H^\Lambda)}^p\bigg]   \d{\alpha} \nonumber \\
   \leq C_1 \| \J_\Omega u^i \|_{W^{1,p}_\p(\Wast; H^1_\alpha(\Omega_H^\Lambda))}^p \nonumber \\
   & \qquad\qquad + C_2 \sum_{j\in I_1} \int_\Wast | k^2 - |\Lambda^* j + \alpha|^2 |^{-p/2} \d{\alpha} \sup_{\alpha \in \Wast} \| \J_\Omega u^i(\alpha,\cdot) \|_{H^1_\alpha(\Omega_H^\Lambda)}^p . \label{eq:aux1146}
\end{align}
Note that the last supremum is finite by~\eqref{eq:assSource}. 
The integrand in the last line of~\eqref{eq:aux1146} is singular at the finitely many points in $E_{\Lambda^*}$, and the representation 
\[
  \alpha \mapsto | k^2 - |\Lambda^* j + \alpha|^2 |^{-p/2}
  = [k+|\Lambda^* j + \alpha|]^{-p/2} |k-|\Lambda^* j + \alpha||^{-p/2}
\]
shows that the singularity is of the order $p/2$, which is strictly less than one for $p \in [1,2)$. 
Under the assumption that $p \in [1,2)$, the latter function is hence integrable in $\Wast$. 
Thus, the integral $\int_\Wast | k^2 - |\Lambda^* j + \alpha|^2 |^{-p/2} \d{\alpha}$ exists as an improper integral from~\eqref{eq:aux1146} takes a finite value, such that $w' \in L^p(\Wast; \widetilde{H}^1_\alpha(\Omega_H^\Lambda))$ and $w \in W^{1,p}_\p(\Wast; \widetilde{H}^1_\alpha(\Omega_H^\Lambda))$.
\end{proof}

\begin{remark}
The proof of Theorem~\ref{th:exiSolScalPerio}(b) indicates that the range of $p\in[1,2)$ is sharp. 
A general solution theory for~\eqref{eq:heAlpha} in $W^{1,2}_\p(\Wast; \widetilde{H}^1_\alpha(\Omega_H^\Lambda)) = H^1_\p(\Wast; \widetilde{H}^1_\alpha(\Omega_H^\Lambda))$ actually cannot be expected, as we know from~\cite{Chand2010} that for general aperiodic surfaces the surface scattering problem~\eqref{eq:varFormHEScal} does merely possess unique solutions in $\widetilde{H}^1_r(\Omega_H)$ for $|r|<1$.
% (b) A sufficient ingredient for the proof of Theorem~\ref{th:exiSolScalPerio}(a) is the uniform boundedness of the solution operators $A_\alpha$ for all quasiperiodicities $\alpha \in \Wast$. 
\end{remark}

\section{Error Estimates for a Discrete Inverse Bloch Transform}\label{se:errInvBloch}
We now aim to use the forward and inverse Bloch transform to design a numerical scheme approximating the solution of variational formulation~\eqref{eq:varFormHEScal} of the surface scattering problem~\eqref{eq:HE}-\eqref{eq:URC} that is posed in the unbounded layer $\Omega_H$. 
Naturally, the idea is to solve several quasiperiodic scattering problems~\eqref{eq:heAlpha} posed in the bounded domain $\Omega_H^\Lambda$ approximately in a finite-dimensional space, in order to get an approximation to the solution of the variational formulation~\eqref{eq:varFormHEScal} of~\eqref{eq:HE}-\eqref{eq:URC} via a discretized inverse Bloch transform. 
Thus, we discretize the interval $\Wast = (-\pi/\Lambda^*, \pi/\Lambda^*]$ uniformly by $N \in\N$ discretization points 
\[
  \alpha_j = -\pi/\Lambda^*+2\pi j/(N\Lambda^*) = -\pi/\Lambda^*+2 j \Lambda/(2N) 
  \qquad \text{ for } j=1,\dots,N.
\]
The first step of the scheme is to compute the Bloch transform $J_{\Omega}u^i(\alpha_j,\cdot)$ of the incident field at the discretization points $\alpha_j$ and to solve the quasiperiodic variational problems~\eqref{eq:heAlpha} for all $\alpha_j$, $j=1,\dots,N$, e.g., by the finite element method (any discretization method that yields convergent discrete solutions could be used here). 
If we denote by $w(\alpha,\cdot)\in H^1_\alpha(\Omega_H^\Lambda)$ the exact solution of the variational problem \eqref{eq:heAlpha} and by $w_h(\alpha,\cdot)$ the discrete solution, then our error analysis basically requires that $\| w_h(\alpha,\cdot) - w(\alpha,\cdot)\|$ tends to zero in some approximate norm for all $\alpha \in \Wast$. 
In particular, the Dirichlet scattering problem considered here can be replaced by any problem such that uniqueness and existence holds in a suitable variational space. 

The second step of the scheme is to obtain an approximate solution to the scattering problem \eqref{eq:HE}-\eqref{eq:URC} via the inverse Bloch transform. 
As the inverse Bloch transform, see \eqref{eq:JOmegaInverse}, is for points $x \in \Omega_H^\Lambda$ merely an integration on $\Wast$, we use a simple integration rule (the trapezoidal rule) for numerical approximation, 
\begin{equation}
\label{numeric:inverseBloch}
    \Big( \J_{\Omega,N}^{-1} \{ w(\alpha_j, \cdot) \}_{j=1}^N \Big) (x)
    = \left[ \frac{\Lambda}{2\pi} \right]^{1/2} \frac{2\pi}{N\Lambda^*}    \sum_{j=1}^N w(\alpha_j, x), 
    \qquad x \in \Omega_H^\Lambda.
\end{equation}
% As for~\eqref{eq:JOmegaInverse}, we put emphasis on the fact that in the latter equation the function $x \mapsto w(\alpha_j, x)$ is the quasiperiodic extension of $w(\alpha_j, \cdot) \in \widetilde{H}^1_{\alpha_j}(\Omega_H^\Lambda)$ to all of $\Omega_H$. 
The numerical approximation of the solution $u \in \widetilde{H}^1_r(\Omega_H)$ to~\eqref{eq:varFormHEScal} then equals  \begin{equation}
\label{numeric:solution}
  u_{N,h}=\left( \J_{\Omega,N}^{-1} \{ w_h(\alpha_j, \cdot) \}_{j=1}^N \right) (x)
= \left[ \frac{\Lambda}{2\pi} \right]^{1/2} \frac{2\pi}{N\Lambda^*}    \sum_{j=1}^N w_h(\alpha_j, x),
    \qquad x \in \Omega_H^\Lambda.
\end{equation}

\begin{remark}
(a) For simplicity, we abbreviate $\J_{\Omega,N}^{-1} \{ w(\alpha_j, \cdot) \}_{j=1}^N$ by $\J_{\Omega,N}^{-1}w$ whenever this does not cause confusion.\\[1mm]
(b) We could as well approximate $\J_\Omega^{-1}$ for points outside $\Omega_H^\Lambda$ by approximating the corresponding integral from~\eqref{eq:JOmegaInverse} and analyze its convergence by the very same analysis we use for $\J_{\Omega,N}^{-1}$, 
\begin{equation}\label{eq:aux714} 
  \left( \J_\Omega^{-1} w \right) \left(x+ \left( \begin{smallmatrix} \Lambda j \\ 0 \end{smallmatrix} \right) \right)
  \approx \left[ \frac{\Lambda}{2\pi} \right]^{1/2} \frac{2\pi}{N\Lambda^*} \sum_{j=1}^N w_h(\alpha_j, x) e^{\i \, \Lambda j \, \alpha_j},
  \quad x \in \Omega_H^\Lambda, \, j \in \Z.
\end{equation}
In particular, the same convergence rate as for $\J_{\Omega,N}^{-1}$ holds on any (fixed) shifted domain $\Omega_H^\Lambda + (\Lambda j,0)^\top$. 
Once one knows the quasiperiodic approximations $w_h(\alpha_j, \cdot)$ on $\Omega_H^\Lambda$, the numerical effort to evaluate the right-hand side of~\eqref{eq:aux714} is of course minimal. 
\end{remark}

In this and the following section we will show error estimates that yield an estimate for the approximation error between $u$ and $u_{N,h}$. 
Due to the two steps of the numerical scheme described by~\eqref{numeric:solution}, the error is due to both numerical approximation of the inverse Bloch transform and numerical approximation of solutions to the quasiperiodic scattering problems by the finite element method.

We first study the error between the exact inverse Bloch transform $\J^{-1}_{\Omega} w$ and the numerical approximation $\J^{-1}_{\Omega,N} w$ for an arbitrary $w \in W^{1,p}_\p(\Wast;H^1_\alpha(\Omega_H^\Lambda))$ with $p \in (1, \infty]$.  
To this end, we define the piecewise linear interpolation operator $I_N$ by 
\begin{equation}
(I_N w)(\alpha,x)=\frac{\alpha-\alpha_{j-1}}{\alpha_j-\alpha_{j-1}}\left[w(\alpha_j,x)-w(\alpha_{j-1},x)\right]+w(\alpha_{j-1},x) \quad \text{for } \alpha\in (\alpha_{j-1},\alpha_j] 
\end{equation}
with $j=1,2,\dots,N$, and $x \in \Omega_H^\Lambda$. 
Note that $I_N$ is well-defined on $W^{1,p}_\p(\Wast;H^1_\alpha(\Omega_H^\Lambda))$ with $p\in (1,\infty]$, because $\Wast$ is a one-dimensional interval and Sobolev's embedding result for functions defined on $\Wast$ with values in $H^1(\Omega_H^\Lambda)$ states that such functions are H\"older continuous with H\"older exponent $1-1/p>0$.  
From the periodicity of $w$ in $\alpha$, it is further easy to see that 
\begin{equation}
(\J^{-1}_{\Omega,N}w)(x)=\left[\frac{\Lambda}{2\pi}\right]^{1/2}\int_{-\pi/\Lambda^*}^{\pi/\Lambda^*}(I_N w)(\alpha,x) \d{\alpha}, \qquad x \in \Omega_H^\Lambda. 
\end{equation}

Before the study of the approximation error, we state a classical Minkowski integral inequality, see~\cite[Theorem 202]{Hardy1988}.
\begin{lemma}
Suppose $(S_1,\mu_1)$ and $(S_2,\mu_2)$ are two measure spaces and $F: \, S_1\times S_2\rightarrow \R$ is measurable. Then the following inequality holds for any $p \geq 1$
\begin{equation}
\left[\int_{S_2}\left|\int_{S_1}F(y,z)\d{\mu_1(y)}\right|^p \d{\mu_2(z)}\right]^{1/p} 
\leq \int_{S_1}\left(\int_{S_2}|F(y,z)|^p \d{\mu_2(z)} \right)^{1/p} \d{\mu_1(y)}.
\label{minkowski}\end{equation}
\end{lemma}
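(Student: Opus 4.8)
The plan is to deduce~\eqref{minkowski} from Tonelli's theorem and H\"older's inequality via a duality argument. Write $p'=p/(p-1)\in(1,\infty]$ for the conjugate exponent of $p$, put $G(z):=\int_{S_1}|F(y,z)|\,\d{\mu_1(y)}\in[0,\infty]$, and note the pointwise bound $\big|\int_{S_1}F(y,z)\,\d{\mu_1(y)}\big|\le G(z)$; hence it suffices to show $\|G\|_{L^p(S_2)}\le R$, where $R:=\int_{S_1}\big(\int_{S_2}|F(y,z)|^p\,\d{\mu_2(z)}\big)^{1/p}\d{\mu_1(y)}$ is the right-hand side of~\eqref{minkowski}. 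If $R=\infty$ there is nothing to prove, so I assume $R<\infty$.

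The core estimate is: for every measurable $h\ge0$ on $S_2$ with $\|h\|_{L^{p'}(S_2)}\le1$ one has $\int_{S_2}G(z)h(z)\,\d{\mu_2(z)}\le R$. Indeed, since $h(z)$ does not depend on $y$, Tonelli's theorem (the integrand being nonnegative and measurable) shows that the left-hand side equals $\int_{S_1}\big(\int_{S_2}|F(y,z)|h(z)\,\d{\mu_2(z)}\big)\d{\mu_1(y)}$, and H\"older's inequality in the inner $z$-integral with exponents $p$ and $p'$ bounds this by $\|h\|_{L^{p'}(S_2)}\int_{S_1}\big(\int_{S_2}|F(y,z)|^p\,\d{\mu_2(z)}\big)^{1/p}\d{\mu_1(y)}\le R$.

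To convert this into a bound on $\|G\|_{L^p(S_2)}$ I would argue by truncation, to sidestep the issue that $G$ is not yet known to lie in $L^p(S_2)$: using $\sigma$-finiteness of $\mu_2$, fix $E_n\uparrow S_2$ with $\mu_2(E_n)<\infty$, and set $g_{M,n}:=\min\{G,M\}\,\1_{E_n}$ for $M>0$, so that $g_{M,n}\in L^p(S_2)$. By the $L^p$--$L^{p'}$ duality of the norm, $\|g_{M,n}\|_{L^p(S_2)}=\sup\{\int_{S_2}g_{M,n}h\,\d{\mu_2}:h\ge0,\ \|h\|_{L^{p'}(S_2)}\le1\}$, and since $g_{M,n}\le G$ pointwise the core estimate gives $\|g_{M,n}\|_{L^p(S_2)}\le R$ uniformly in $M$ and $n$. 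Letting $M\to\infty$ and then $n\to\infty$ we have $g_{M,n}\uparrow G$, so the monotone convergence theorem yields $\|G\|_{L^p(S_2)}\le R$, which is the assertion; for $p=1$ one reads $p'=\infty$, and that case is in any event just Tonelli's theorem directly.

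I expect the only genuine obstacle to be the measure-theoretic bookkeeping in the last step, namely making sure the inequality is not obtained under a hidden finiteness assumption on $\|G\|_{L^p(S_2)}$; the truncation handles this cleanly. A duality-free variant is also available: assuming provisionally $0<\|G\|_{L^p(S_2)}<\infty$, expand $\int_{S_2}G^p=\int_{S_2}G^{p-1}\int_{S_1}|F|\,\d{\mu_1}\,\d{\mu_2}$, exchange the integrals by Tonelli, apply H\"older with exponents $p'$ and $p$ using $(p-1)p'=p$, and divide by $\|G\|_{L^p(S_2)}^{p/p'}$; the provisional assumption is then removed by the same truncation. In any case, since this is a standard fact, one may simply cite~\cite[Theorem~202]{Hardy1988}.
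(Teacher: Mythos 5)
Your argument is correct. The paper itself supplies no proof of this lemma: it is stated as a classical fact and dispatched with the citation to \cite[Theorem~202]{Hardy1988}, which you also note at the end, so there is no internal proof to compare against. Your duality proof is the standard one and is complete: the core estimate (Tonelli to swap the order of integration, then H\"older in $z$ with exponents $p$ and $p'$) is right, and the truncation $g_{M,n}=\min\{G,M\}\,\mathds{1}_{E_n}$ followed by monotone convergence correctly removes the a priori finiteness assumption on $\|G\|_{L^p(S_2)}$ that the naive duality or the ``divide by $\|G\|_{L^p}^{p/p'}$'' variant would otherwise require. The only fine print is that both Tonelli's theorem and your exhaustion $E_n\uparrow S_2$ need $\sigma$-finiteness, which the lemma as stated does not hypothesize; this is a defect of the statement rather than of your proof, and it is harmless here since the lemma is only ever applied with $S_1=\Wast$ and $S_2=\Omega_H^\Lambda$ carrying Lebesgue measure. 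Given that the paper treats this as a quotable classical result, simply citing \cite[Theorem~202]{Hardy1988} as you suggest in your last sentence would also have sufficed.
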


\begin{lemma}
For any function $w \in W^{1,p}_\p(\Wast;H^1_\alpha(\Omega_H^\Lambda))$ with $p \in[1,2)$, the following inequalities hold for $j=1,2$: 
\begin{eqnarray}
  \int_{\Omega_H^\Lambda }\left[\int_{\Wast}|w(\alpha,x)|^p\d{\alpha}\right]^{2/p} \hspace*{-1mm} \d{x}  &\leq& \|w\|_{L^p(\Wast;H^1_\alpha(\Omega_H^\Lambda))}^2, \label{inequ1}\\
  \int_{\Omega_H^\Lambda }\left[\int_{\Wast}\left|\frac{\partial w}{\partial x_j}(\alpha,x)\right|^p \hspace*{-1mm} \d{\alpha}\right]^{2/p} \d{x} &\leq& \|w\|_{L^p(\Wast;H^1_\alpha(\Omega_H^\Lambda))}^2 ,\label{inequ2}\\
  \int_{\Omega_H^\Lambda }\left[\int_{\Wast}\left|\frac{\partial w}{\partial \alpha}(\alpha,x)\right|^p  \hspace*{-1mm} \d{\alpha}\right]^{2/p} \d{x} &\leq& \|w\|_{W^{1,p}_\p(\Wast;H^1_\alpha(\Omega_H^\Lambda))}^2,\label{inequ3}\\
 \int_{\Omega_H^\Lambda }\left[\int_{\Wast}\left|\frac{\partial}{\partial \alpha} \frac{\partial w}{\partial x_j}(\alpha,x)\right|^p \hspace*{-1mm} \d{\alpha}\right]^{2/p} \d{x} &\leq& \|w\|_{W^{1,p}_\p(\Wast;H^1_\alpha(\Omega_H^\Lambda))}^2. \label{inequ4}
\end{eqnarray}
\end{lemma}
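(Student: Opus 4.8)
The plan is to reduce all four inequalities to a single application of the Minkowski integral inequality~\eqref{minkowski}, exploiting that the hypothesis $p\in[1,2)$ forces the exponent $2/p$ to be at least $1$. Writing $g$ for any one of the functions $w$, $\partial w/\partial x_j$ ($j=1,2$), $\partial w/\partial\alpha$, or $\partial^2 w/(\partial\alpha\,\partial x_j)$, each of the four left-hand sides has the common form $\int_{\Omega_H^\Lambda}\big(\int_{\Wast}|g(\alpha,x)|^p\d{\alpha}\big)^{2/p}\d{x}$. First I would apply~\eqref{minkowski} with $(S_1,\mu_1)=(\Wast,\mathrm{d}\alpha)$, $(S_2,\mu_2)=(\Omega_H^\Lambda,\mathrm{d}x)$, the exponent in~\eqref{minkowski} taken to be $2/p\ge1$, and $F(\alpha,x)=|g(\alpha,x)|^p\ge 0$; measurability of $F$ is clear since $w\in W^{1,p}_\p(\Wast;H^1_\alpha(\Omega_H^\Lambda))$ means $g$ is a locally integrable function on $\Wast\times\Omega_H^\Lambda$.

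Carrying this out, the left-hand side of~\eqref{minkowski} equals $\big[\int_{\Omega_H^\Lambda}(\int_{\Wast}|g|^p\d{\alpha})^{2/p}\d{x}\big]^{p/2}$ (the inner integral being non-negative, the absolute value in~\eqref{minkowski} is harmless), while its right-hand side equals $\int_{\Wast}\big(\int_{\Omega_H^\Lambda}|g(\alpha,x)|^{2}\d{x}\big)^{p/2}\d{\alpha}=\int_{\Wast}\|g(\alpha,\cdot)\|_{L^2(\Omega_H^\Lambda)}^p\d{\alpha}$. Raising both non-negative sides to the power $2/p$ gives
\[
  \int_{\Omega_H^\Lambda}\Big(\int_{\Wast}|g(\alpha,x)|^p\d{\alpha}\Big)^{2/p}\d{x}
  \ \le\ \Big(\int_{\Wast}\|g(\alpha,\cdot)\|_{L^2(\Omega_H^\Lambda)}^p\d{\alpha}\Big)^{2/p}.
\]
It then remains to bound the inner $L^2(\Omega_H^\Lambda)$-norm pointwise in $\alpha$. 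For $g=w$ and $g=\partial w/\partial x_j$ one uses $\|g(\alpha,\cdot)\|_{L^2(\Omega_H^\Lambda)}\le\|w(\alpha,\cdot)\|_{H^1_\alpha(\Omega_H^\Lambda)}$, which turns the right-hand side into $\|w\|_{L^p(\Wast;H^1_\alpha(\Omega_H^\Lambda))}^2$ and yields~\eqref{inequ1} and~\eqref{inequ2}. For $g=\partial w/\partial\alpha$ and $g=\partial^2 w/(\partial\alpha\,\partial x_j)=\partial_{x_j}(\partial_\alpha w)$ one uses $\|g(\alpha,\cdot)\|_{L^2(\Omega_H^\Lambda)}\le\|\partial_\alpha w(\alpha,\cdot)\|_{H^1_\alpha(\Omega_H^\Lambda)}$ together with the fact that $\int_{\Wast}\|\partial_\alpha w(\alpha,\cdot)\|_{H^1_\alpha(\Omega_H^\Lambda)}^p\d{\alpha}$ is one of the two summands of $\|w\|_{W^{1,p}_\p(\Wast;H^1_\alpha(\Omega_H^\Lambda))}^p$, which yields~\eqref{inequ3} and~\eqref{inequ4}.

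The only step that genuinely uses the hypothesis is the requirement $p<2$, i.e.\ $2/p\ge1$: this is precisely the regime in which Minkowski's inequality can be applied in the direction that pulls the $\alpha$-integration outside the $x$-integration, and for $p\ge2$ the asserted estimates fail in general. The remaining ingredients are elementary, namely that the $L^2(\Omega_H^\Lambda)$-norm of a function and of each of its first-order spatial derivatives is dominated by the $H^1_\alpha(\Omega_H^\Lambda)$-norm, and that the weak $\alpha$-derivative commutes with the spatial derivatives; so I do not anticipate any real obstacle beyond the bookkeeping of exponents in the Minkowski step.
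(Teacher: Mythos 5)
Your proof is correct and follows essentially the same route as the paper: a single application of the Minkowski integral inequality \eqref{minkowski} to $F=|g|^p$ with exponent $2/p\ge 1$, followed by bounding $\|g(\alpha,\cdot)\|_{L^2(\Omega_H^\Lambda)}$ by the appropriate $H^1_\alpha(\Omega_H^\Lambda)$-norm of $w(\alpha,\cdot)$ or $\partial_\alpha w(\alpha,\cdot)$. Your bookkeeping of the exponents is in fact cleaner than the paper's (which carries a harmless typo in the outer exponent), and your only inaccuracy is the tangential remark that the estimates fail for all $p\ge 2$ --- at $p=2$ Minkowski with exponent $1$ is an equality and the bounds still hold --- which does not affect the argument.
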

\begin{proof}
We prove, as an example, inequality \eqref{inequ1}.  
As $w \in W^{1,p}_\p(\Wast;H^1_\alpha(\Omega_H^\Lambda))$,
\begin{equation*}
\|w\|^p_{W^{1,p}_\p(\Wast;H^1_\alpha(\Omega_H^\Lambda))}=
\int_{\Wast}\left[\|w(\alpha,\cdot)\|^p_{H^1_\alpha(\Omega_H^\Lambda)}+\left\|\frac{\partial}{\partial\alpha}w(\alpha,\cdot)\right\|^p_{H^1_\alpha(\Omega_H^\Lambda)}\right]\d{\alpha}<\infty,
\end{equation*}
such that 
\begin{eqnarray*}
\int_{\Wast}\bigg[\int_{\Omega_H^\Lambda } |w(\alpha,x)|^2 \d{x} \bigg]^{p/2}\d{\alpha} \leq \|w\|^p_{W^{1,p}_\p(\Wast;H^1_\alpha(\Omega_H^\Lambda))}.
\end{eqnarray*}
The inequality \eqref{minkowski} for $y=\alpha$, $z=x$, $F(y,z) = |w(\alpha,x)|^2$, and $p$ replaced by $2/p > 1$ hence implies that  
\begin{eqnarray*}
\left(\int_{\Omega_H^\Lambda }\left[\int_{\Wast}|w(\alpha,x)|^p\d{\alpha}\right]^{2/p} \d{x} \right)^{1/2}
&\leq&
\int_{\Wast}\left[\int_{\Omega_H^\Lambda }|w(\alpha,x)|^{p\cdot 2/p} \d{x} \right]^{p/2}\d{\alpha}\\
&\leq &\|w\|_{W^{1,p}_\p(\Wast;H^1_\alpha(\Omega_H^\Lambda))}.
\end{eqnarray*}
So \eqref{inequ1} is proved. 
The inequalities \eqref{inequ2}, \eqref{inequ3} and \eqref{inequ4} can be proved similarly.
\end{proof}

The bounds from the last lemma allow to prove the following error estimate for the numerical approximation of the inverse Bloch transform.

\begin{theorem}\label{th:approxJ-1}
For $w\in W^{1,p}_\p(\Wast;H^1_\alpha(\Omega_H^\Lambda))$ with $p \in (1,\infty]$, the numerical approximation error between $\J^{-1}_{\Omega,N}w$ and $\J^{-1}_{\Omega}w$ is bounded by
\begin{equation} \label{eq:aux716}
\| (\J^{-1}_{\Omega,N} - \J^{-1}_{\Omega}) w\|_{H^1(\Omega_H^\Lambda)}\leq C N^{-1} \|w(\alpha,x)\|_{W^{1,p}_\p(\Wast;H^1_\alpha(\Omega_H^\Lambda))},
\end{equation}
where $C$ is independent of $N \in\N$.
\label{num:invBloch}\end{theorem}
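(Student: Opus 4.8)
\emph{Plan of proof.} The idea is to reduce the whole error to a one-dimensional piecewise linear interpolation error in the quasiperiodicity $\alpha$, to estimate this error pointwise in $x$ using only \emph{one} $\alpha$-derivative, and then to convert the resulting mixed norm into $\| \cdot \|_{W^{1,p}_\p(\Wast;H^1_\alpha(\Omega_H^\Lambda))}$ by means of the preceding lemma. As a first step I would record that, for $x \in \Omega_H^\Lambda$, one has $(\J^{-1}_{\Omega,N} w)(x) = [\Lambda/(2\pi)]^{1/2}\int_\Wast (I_N w)(\alpha,x) \d{\alpha}$ while $(\J^{-1}_\Omega w)(x) = [\Lambda/(2\pi)]^{1/2}\int_\Wast w(\alpha,x) \d{\alpha}$, so that
\[
  (\J^{-1}_{\Omega,N} - \J^{-1}_{\Omega}) w(x) = \left[\frac{\Lambda}{2\pi}\right]^{1/2} \int_\Wast (I_N w - w)(\alpha,x)\d{\alpha}, \qquad x \in \Omega_H^\Lambda .
\]
Since $I_N$ acts only on the variable $\alpha$, the spatial derivatives $\partial_{x_1},\partial_{x_2}$ commute both with $I_N$ and with $\int_\Wast\d{\alpha}$; this is legitimate because, by Sobolev's embedding on the one-dimensional interval $\Wast$ (and $p>1$), $w$ is H\"older continuous from $\Wast$ into $H^1(\Omega_H^\Lambda)$, so the nodal functions $w(\alpha_j,\cdot)$ lie in $H^1(\Omega_H^\Lambda)$ and $\partial_{x_\ell}I_N w = I_N \partial_{x_\ell} w$ for $\ell = 1,2$. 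It therefore suffices to bound $\| \int_\Wast (g - I_N g)(\alpha,\cdot)\d{\alpha} \|_{L^2(\Omega_H^\Lambda)}$ for each $g \in \{ w, \partial_{x_1}w, \partial_{x_2}w \}$ and to sum the three contributions to control $\| (\J^{-1}_{\Omega,N} - \J^{-1}_{\Omega}) w \|_{H^1(\Omega_H^\Lambda)}$.

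The core of the argument is a pointwise-in-$x$ bound on the integrated interpolation error. Writing $h_N = 2\pi/(N\Lambda^*) = \Lambda/N$ for the step size and fixing a panel $(\alpha_{j-1},\alpha_j]$, the fundamental theorem of calculus in $\alpha$ gives the exact representation
\[
  (g - I_N g)(\alpha,x) = \int_{\alpha_{j-1}}^{\alpha_j} K(\alpha,\beta)\,\partial_\alpha g(\beta,x)\d{\beta}, \qquad K(\alpha,\beta) = \1_{\{\beta<\alpha\}} - \frac{\alpha-\alpha_{j-1}}{h_N},
\]
with $|K|\le 1$; integrating in $\alpha$ over the panel, applying Fubini, and using $\big|\int_{\alpha_{j-1}}^{\alpha_j}K(\alpha,\beta)\d{\alpha}\big| = \big| (\alpha_j-\beta) - h_N/2 \big| \le h_N/2$ (this is precisely the trapezoidal quadrature error on one panel) yields, after summing over the $N$ panels that cover $\Wast$,
\[
  \Big| \int_\Wast (g - I_N g)(\alpha,x)\d{\alpha} \Big| \le \frac{h_N}{2}\int_\Wast |\partial_\alpha g(\alpha,x)|\d{\alpha}, \qquad x \in \Omega_H^\Lambda .
\]
Then I would apply H\"older's inequality in $\alpha$ on the bounded interval $\Wast$ to replace the inner $L^1(\Wast)$-norm by an $L^p(\Wast)$-norm up to the constant $|\Wast|^{1-1/p}$, square the inequality, and integrate over $x \in \Omega_H^\Lambda$, which bounds $\| \int_\Wast (g - I_N g)\d{\alpha} \|_{L^2(\Omega_H^\Lambda)}^2$ by $C\,h_N^2 \int_{\Omega_H^\Lambda}\big(\int_\Wast |\partial_\alpha g(\alpha,x)|^p\d{\alpha}\big)^{2/p}\d{x}$.

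The final step is to invoke the preceding lemma --- precisely the place where Minkowski's integral inequality~\eqref{minkowski} enters --- to bound $\int_{\Omega_H^\Lambda}(\int_\Wast |\partial_\alpha w|^p\d{\alpha})^{2/p}\d{x}$ by $\|w\|_{W^{1,p}_\p(\Wast;H^1_\alpha(\Omega_H^\Lambda))}^2$ via~\eqref{inequ3}, and to bound $\int_{\Omega_H^\Lambda}(\int_\Wast |\partial_\alpha\partial_{x_\ell}w|^p\d{\alpha})^{2/p}\d{x}$ by the same quantity via~\eqref{inequ4}. Summing the three $L^2(\Omega_H^\Lambda)$-contributions and recalling $h_N = \Lambda/N$ then gives $\|(\J^{-1}_{\Omega,N} - \J^{-1}_{\Omega})w\|_{H^1(\Omega_H^\Lambda)} \le C N^{-1}\|w\|_{W^{1,p}_\p(\Wast;H^1_\alpha(\Omega_H^\Lambda))}$ with $C$ independent of $N$. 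Two points deserve care. First, the auxiliary lemma is phrased only for $p\in[1,2)$, whereas here $p\in(1,\infty]$; since $\Wast$ is bounded one has $W^{1,p}_\p(\Wast;H^1_\alpha(\Omega_H^\Lambda)) \hookrightarrow W^{1,q}_\p(\Wast;H^1_\alpha(\Omega_H^\Lambda))$ for any $1\le q \le p$, so one may simply fix some $q\in(1,2)$ and run the whole argument with $q$ in place of $p$. Second --- and this is the only genuinely delicate point --- one must resist using the classical second-order trapezoidal error formula (which needs $\partial_\alpha^2 g$ and is unavailable under the mere hypothesis $w\in W^{1,p}_\p$): the first-order kernel representation above is what makes the estimate go through with only one $\alpha$-derivative, and it already delivers the optimal rate $O(N^{-1})$.
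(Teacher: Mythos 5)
Your proposal is correct and follows essentially the same route as the paper: reduce the error to the one-dimensional piecewise-linear interpolation error in $\alpha$ integrated over $\Wast$, bound that using a single $\alpha$-derivative (you via a Peano-kernel/trapezoidal-error representation with $\big|\int_{\alpha_{j-1}}^{\alpha_j}K(\alpha,\beta)\d{\alpha}\big|\le h_N/2$, the paper via the pointwise H\"older-continuity estimate $|w(t_2,x)-w(t_1,x)|\le (t_2-t_1)^{1/q}\|\partial_\alpha w(\cdot,x)\|_{L^p(t_1,t_2)}$ followed by a discrete H\"older sum over the panels), and then convert the resulting mixed norm with the Minkowski-inequality lemma, i.e.\ \eqref{inequ3} and \eqref{inequ4}. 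Your explicit reduction of the case $p\ge 2$ to some $q\in(1,2)$ via the embedding $W^{1,p}_\p(\Wast;H^1_\alpha(\Omega_H^\Lambda))\hookrightarrow W^{1,q}_\p(\Wast;H^1_\alpha(\Omega_H^\Lambda))$ on the bounded interval $\Wast$ is a detail the paper's proof leaves implicit, since the auxiliary lemma is stated only for $p\in[1,2)$.
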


\begin{proof}
Density of smooth functions in $W^{1,p}_\p(\Wast;H^1_\alpha(\Omega_H^\Lambda))$ implies that it is sufficient to prove~\eqref{eq:aux716} for a function $w$ that is continuously differentiable in $\alpha$. 
For any $t_1,t_2\in \Wast$ with $t_1<t_2$ and $x\in\Omega_H^\Lambda$, we define the conjugate index $q$ to $p$ by $1/q+1/p=1$ and estimate 
\begin{eqnarray*}
|w(t_2,x)-w(t_1,x)|&=&\left|\int_{t_1}^{t_2}\frac{\partial w}{\partial\alpha}(\alpha,x)\d{\alpha}\right|\\&\leq& \left(\int_{t_1}^{t_2}\left|\frac{\partial w}{\partial\alpha}(\alpha,x)\right|^p\d{\alpha}\right)^{1/p}\left(\int_{t_1}^{t_2}1^q \d{\alpha}\right)^{1/q}\\
&\leq & (t_2-t_1)^{1/q}\left\|\frac{\partial w}{\partial\alpha}(\cdot,x)\right\|_{L^p(t_1,t_2)}.
\end{eqnarray*}
With this result, the error of the piecewise linear interpolation of $w$ for $\alpha \in (\alpha_{j-1},\alpha_j]$ is bounded by 
\begin{align*}
 \big|(I_N w)(\alpha,x) &- w(\alpha,x)\big| = \left|\frac{\alpha-\alpha_{j-1}}{\alpha_j-\alpha_{j-1}}\left[w(\alpha_j,x)-w(\alpha_{j-1},x)\right]+w(\alpha_{j-1},x)-w(\alpha,x)\right| \\
&\leq \left|\frac{\alpha-\alpha_{j-1}}{\alpha_j-\alpha_{j-1}}\right|\, \big|w(\alpha_j,x)-w(\alpha_{j-1},x)|+|w(\alpha,x)-w(\alpha_{j-1},x) \big|\\
&\leq \left[\left(\alpha-\alpha_{j-1}\right)(\alpha_j-\alpha_{j-1})^{1/q-1}+(\alpha-\alpha_{j-1})^{1/q}\right]\left\|\frac{\partial}{\partial\alpha}w(\cdot,x)\right\|_{L^p(\alpha_{j-1},\alpha_j)}, 
\end{align*}
where $\alpha_0 := -\pi/\Lambda^*$ and $w(\alpha_0,\cdot) := \exp(\i \,\Lambda \,\alpha) \, w(\alpha_N,\cdot) \, [= \exp(\i \,\Lambda \,\alpha) w(\pi/\Lambda^*,\cdot)]$ to respect the quasiperiodicity of $\alpha \mapsto w(\alpha,\cdot)$. 
Integrating over the whole interval $\Wast = (-\pi/\Lambda^*,\pi/\Lambda^*]$, we find that 
\begin{align*}
&\int_{-\pi/\Lambda^*}^{\pi/\Lambda^*}   \big| (I_N w)(\alpha,x) - w(\alpha,x) \big| \d{\alpha} 
= \sum_{j=1}^N\int_{\alpha_{j-1}}^{\alpha_j} \big|(I_N w)(\alpha,x)-w(\alpha,x)\big| \d{\alpha}\\
& \qquad \leq \sum_{j=1}^N \int_{\alpha_{j-1}}^{\alpha_j} \left[\left(\alpha-\alpha_{j-1}\right)(\alpha_j-\alpha_{j-1})^{1/q-1}+(\alpha-\alpha_{j-1})^{1/q}\right] \hspace*{-1mm} \d{\alpha} \left\|\frac{\partial w}{\partial\alpha}(\cdot,x)\right\|_{L^p(\alpha_{j-1},\alpha_j)}\\
&\qquad \leq\sum_{j=1}^N \left[ \frac12 \left[\frac{2\pi}{N\Lambda^*} \right]^2 + \frac{1}{1+1/q} \right] (\alpha_j-\alpha_{j-1})^{1+1/q} \left\|\frac{\partial w}{\partial\alpha}(\cdot,x)\right\|_{L^p(\alpha_{j-1},\alpha_j)} \\
& \qquad \leq \left[\frac{2\pi^2}{[\Lambda^*]^2} + \frac{1}{2}\right] N^{1-1/p} \left|\frac{2\pi}{N\Lambda^*}\right|^{1+1/q}\left\|\frac{\partial w}{\partial\alpha} (\cdot,x)\right\|_{L^p(\Wast)}
= C N^{-1} \left\|\frac{\partial w}{\partial\alpha} (\cdot,x)\right\|_{L^p(\Wast)},
\end{align*}
where we used that $a_1^{1/p} + \dots + a_N^{1/p} \leq N^{1-1/p} (a_1+ \dots+a_N)^{1/p}$ for $a_j= \|(\partial w/\partial\alpha)(\cdot,x) \|_{L^p(\alpha_{j-1},\alpha_j)}^p$. 
By~\eqref{inequ1}, the squared $L^2$-norm in $x\in\Omega_H^\Lambda$ is hence bounded by
\begin{eqnarray*}
\left\|\int_{-\pi/\Lambda^*}^{\pi/\Lambda^*} \big| (I_Nw)(\alpha,\cdot)-w(\alpha,\cdot)\big| \d{\alpha}\right\|_{L^2(\Omega_H^\Lambda)}^2 
&\leq&
C^2 N^{-2} \int_{\Omega_H^\Lambda }\left\|\frac{\partial}{\partial\alpha}w(\alpha,x)\right\|^2_{L^p(\Wast)} \d{x} \\
&\leq&  C^2 N^{-2} \| w \|_{W^{1,p}_\p(\Wast;H^1_\alpha(\Omega_H^\Lambda))}^2.
\end{eqnarray*}
With similar arguments, one can also show the estimate
\begin{eqnarray*}
\bigg\| \int_{-\pi/\Lambda^*}^{\pi/\Lambda^*} \bigg| \frac{\partial}{\partial x_j} [(I_N w)(\alpha,\cdot)-w(\alpha,\cdot)] \bigg| \d{\alpha} \bigg\|_{L^2(\Omega_H^\Lambda)}
\leq  C N^{-1} \| w \|_{W^{1,p}_\p(\Wast;H^1_\alpha(\Omega_H^\Lambda))},
\end{eqnarray*}
which finally implies the claimed bound. 
% \begin{eqnarray*}
% \left\|\J^{-1}_{\Omega}w-\J^{-1}_{\Omega,N}w\right\|_{H^1\left(\Omega_H^\Lambda \right)}
% &=& \left[\frac{\Lambda}{2\pi}\right]^{1/2} \bigg\|\int_{-\pi/\Lambda^*}^{\pi/\Lambda^*}[(I_N w)(\alpha,\cdot)-w(\alpha,\cdot)]\d{\alpha}\bigg\|_{H^1\left(\Omega_H^\Lambda \right)}\\
% &\leq& C\left|\frac{2\pi}{N\Lambda^*}\right|^{1+1/q}\|w(\alpha,x)\|_{W^{1,p}_\p(\Wast;H^1_\alpha(\Omega_H^\Lambda))}.
% \end{eqnarray*}
\end{proof}
Note that the estimate of the latter theorem is optimal, as it bounds, roughly speaking, $L^1$-errors of interpolation operators applied to functions that are once weakly differentiable in $\alpha$ by $N^{-1}$ times the norm of the interpolated function. 
As we never exploited the particular structure of $\widetilde{H}^1_\alpha(\Omega_H^\Lambda)$, Theorem~\ref{th:approxJ-1} holds analogously for functions in $W^{1,p}_\p(\Wast;X)$ with arbitrary Banach spaces $X$. 

\section{Estimates for Discrete Periodic Scattering Problems and Convergence of the Numerical Scheme}\label{eq:errFemTotal}
After establishing bounds for the discretized inverse Bloch transform in the last section, we now aim to first prove an error estimate for a conforming element approximation of the solution $w = w(\alpha, \cdot)$ of the quasiperiodic scattering problem~\eqref{eq:heAlpha}.  
This allows in a second step to prove convergence of the numerical scheme~\eqref{numeric:solution} for the approximation of the solution $u$ to the scattering problem~\eqref{eq:varFormHEScal} in $\Omega_H$. 

To establish convergence of finite element approximations to~\eqref{eq:heAlpha}, we use standard duality arguments, coupled with regularity and uniqueness results for the adjoint problem to~\eqref{eq:heAlpha}. 
Similar results for a related transmission problem can be found in~\cite{Bao1995}. 
Recall that the solution $w = w(\alpha, \cdot) \in \widetilde{H}^1_\alpha(\Omega_H^\Lambda)$ to~\eqref{eq:heAlpha} solves, by definition, 
\begin{equation}\label{eq:heAlpha2}
  a_\alpha(w,v)
  = \int_{\Gamma_H^\Lambda} f_\alpha \overline{v} \dS 
  \qquad \text{for } f_\alpha = \left[ \frac{\partial }{\partial \nu} \J_\Omega u^i (\alpha,\cdot)- T^+ [\J_\Omega u^i (\alpha,\cdot) ]\big|_{\Gamma_H^\Lambda} \right], %\in H^{-1/2}_\alpha(\Gamma_H^\Lambda)
\end{equation}
and for all $v \in \widetilde{H}^1_\alpha(\Omega_H^\Lambda)$. 
Note that whenever one approximates~\eqref{eq:heAlpha2} in finite-dimensional subspaces of $\widetilde{H}^1_\alpha(\Omega_H^\Lambda)$, the periodic Dirichlet-to-Neumann operator $T^+_\alpha$ naturally has to be truncated. 
We omit in the following to tackle this additional truncation error and merely consider the discretization error due to the finite-dimensional variational space, noting however that the truncation error is analyzed in, e.g.,~\cite{Bao1995, George2011}.
% 
% When solving the quasiperiodic scattering problems, we use a truancated Dirichlet-to-Neumann operator $\tilde{T}^+_{\alpha_j}$ to approximate $T^+_{\alpha_j}$. From the analysis in the \cite{George2011}, for suitable choice of the number of truncated terms, the error between $w_h(\alpha_j,x)$ and $w(\alpha_j,x)$ has the estimate
% \begin{equation}
% \|w_h(\alpha_j,x)-w(\alpha_j,x)\|
% \end{equation}

Assume from now on that $\Omega$ is a domain of class $C^{1,1}$, that is, $\Gamma$ is graph of a function in $C^{1,1}(\R)$, and that $\Omega_H^\Lambda$ is (exactly) covered by a family of regular curved and quasi-uniform meshes with mesh widths $0<h\leq h_0$, see~\cite{Saute2007,Brenn1994}. 
These meshes yield a family of discrete spaces $V_{h} \subset \widetilde{H}^1(\Omega_H^\Lambda))$ of piecewise linear and globally continuous functions $v_h$. 
For $\alpha \in \Wast$ we additionally defined a quasiperiodic subspace $V_h^\alpha \subset \widetilde{H}^1_\alpha(\Omega_H^\Lambda))$ containing all elements $v_h\in V_h$ that satisfy the quasiperiodicity condition 
\[
  \left. (v_h) \right|_{\{x_1=\pi/\Lambda \}} = e^{\i \, \Lambda \, \alpha} \,  \left. (v_h) \right|_{\{x_1=-\pi/\Lambda \}}.
\] 
(This obviously requires the $x_2$-coordinates of mesh points on $\{x_1=\pm\pi/\Lambda \}$ to pairwise match each other in order to allow for non-constant boundary values and simple implementation.)
By construction, elements in $V_h^\alpha$ can then be $\alpha$-quasiperiodically extended to quasiperiodic functions in $H^1_\loc(\Omega_H)$. 

\begin{theorem}\label{th:convRateFEM} 
Suppose $\Omega_H$ is a domain of class $C^{1,1}$, let $\alpha \in \Wast$, define $f_\alpha$ as in~\eqref{eq:heAlpha2} for some incident field $u^i$ that solves the Helmholtz equation in $\Omega$, and assume that $f_\alpha \in H^{1/2}_\alpha(\Gamma_H^\Lambda)$. 
Then the solution $w$ to~\eqref{eq:heAlpha2} belongs to $H^2(\Omega_H^\Lambda)$. 
Further, if $h \in (0,h_0]$ and $w_h \in V_h^\alpha$ solves 
\begin{equation}%\label{eq:heAlpha3}
  a_\alpha(w_h,v_h)
  = \int_{\Gamma_H^\Lambda} f_\alpha \overline{v_h} \dS \quad \text{for all $v_h \in V_h^\alpha$}, 
\end{equation}
then $\| w_h - w \|_{L^2(\Omega_H^\Lambda)} + h \| \nabla(w_h - w) \|_{L^2(\Omega_H^\Lambda)^3}  \leq C h^2 \| f_\alpha \|_{H^{1/2}_\alpha(\Gamma_H^\Lambda)}$ for $C$ independent of $\alpha \in \Wast$.  
\end{theorem}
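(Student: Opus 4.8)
The plan is to split the claim into a regularity statement ($w\in H^2(\Omega_H^\Lambda)$) and an a priori finite element error estimate, and to keep careful track of the dependence of all constants on $\alpha\in\Wast$, since the uniformity in $\alpha$ is what will feed the quadrature error in the subsequent convergence theorem. For the regularity part, I would first extend $w(\alpha,\cdot)$ quasiperiodically to $\Omega_H$ and then observe that on the flat part $\{x_2>H_0\}$ the radiating representation~\eqref{eq:URCAlpha} makes $w$ real-analytic, so only interior and boundary regularity near $\Gamma$ is at stake. Since $\Omega_H$ is of class $C^{1,1}$ and the boundary data on $\Gamma$ is homogeneous Dirichlet, standard elliptic shift theorems (e.g.\ \cite{McLea2000}) upgrade $w\in H^1_\alpha$ to $H^2$ provided the right-hand side lies in $L^2$; here the ``right-hand side'' is concentrated on $\Gamma_H^\Lambda$ via the inhomogeneous Neumann-type datum $f_\alpha$, and the assumption $f_\alpha\in H^{1/2}_\alpha(\Gamma_H^\Lambda)$ together with the smoothing property of $T_\alpha^+$ (continuous from $H^{1/2}_\alpha$ into $H^{3/2}_\alpha$, cf.~the mapping property stated after~\eqref{eq:perioDtN}) is exactly the borderline regularity that yields $w(\alpha,\cdot)|_{\Gamma_H^\Lambda}\in H^{3/2}_\alpha$, whence $w\in H^2(\Omega_H^\Lambda)$ with $\|w\|_{H^2(\Omega_H^\Lambda)}\le C\|f_\alpha\|_{H^{1/2}_\alpha(\Gamma_H^\Lambda)}$. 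The constant must be shown $\alpha$-independent: this follows because the elliptic estimate depends on the operator (independent of $\alpha$) and on the uniform boundedness of $A_\alpha^{-1}$ from Lemma~\ref{th:uniqueAlpha}, plus the $\alpha$-uniform norm of $T_\alpha^+$ already recorded in the proof of Lemma~\ref{th:uniqueAlpha}.

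For the finite element error, I would run the classical Aubin--Nitsche duality argument adapted to the G\r{a}rding-plus-uniqueness situation. Since $a_\alpha$ satisfies a G\r{a}rding inequality (see~\eqref{eq:aux414}) and the associated solution operator $A_\alpha$ is invertible with $\alpha$-uniform bound, a standard perturbation argument (Schatz's lemma) gives, for $h\le h_0$ small enough, discrete inf--sup stability of $a_\alpha$ on $V_h^\alpha$ with a constant uniform in $\alpha$ and $h$; here one exploits continuity of $\alpha\mapsto a_\alpha$ established in Lemma~\ref{th:uniqueAlpha} and a compactness argument over the compact parameter set $\overline{\Wast}$. From discrete stability and C\'ea's lemma one obtains quasi-optimality, $\|w-w_h\|_{H^1}\le C\inf_{v_h\in V_h^\alpha}\|w-v_h\|_{H^1}$, and then the $H^2$-regularity combined with the standard interpolation estimate on quasi-uniform regular (curved) meshes gives $\|w-w_h\|_{H^1(\Omega_H^\Lambda)}\le Ch\|w\|_{H^2(\Omega_H^\Lambda)}\le Ch\|f_\alpha\|_{H^{1/2}_\alpha(\Gamma_H^\Lambda)}$. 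For the $L^2$-estimate I would introduce the adjoint problem $a_\alpha(\psi,z_g)=\int_{\Omega_H^\Lambda}g\overline{z_g}$ (or rather its conjugate), note that it has the same structure — G\r{a}rding inequality, $\alpha$-uniform well-posedness, and, by the same regularity theorem applied to the adjoint operator, $\|z_g\|_{H^2}\le C\|g\|_{L^2}$ — and then the usual manipulation $\|w-w_h\|_{L^2}=\sup_{\|g\|_{L^2}=1}|a_\alpha(w-w_h,z_g-v_h)|\le Ch\|\nabla(w-w_h)\|_{L^2}\le Ch^2\|f_\alpha\|_{H^{1/2}_\alpha(\Gamma_H^\Lambda)}$ closes the argument.

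The main obstacle I anticipate is twofold and both parts are about uniformity rather than about any single inequality. First, making the discrete inf--sup constant genuinely uniform in $\alpha\in\Wast$: the Schatz perturbation argument requires that the ``adjoint solution operator norm times approximation error of the adjoint solution'' be small uniformly, which in turn needs the adjoint regularity bound $\|z_g\|_{H^2}\le C\|g\|_{L^2}$ with $C$ independent of $\alpha$ — so the regularity part must be proved with explicit $\alpha$-uniform constants, not just pointwise in $\alpha$. I would handle this by tracking constants through the elliptic shift estimate and invoking continuity of $\alpha\mapsto a_\alpha$ on the compact set $\overline{\Wast}$ so that ``uniform'' reduces to ``continuous on a compact set.'' Second, the curved-element bookkeeping: since $\Omega_H^\Lambda$ has a genuinely curved boundary $\Gamma$, the interpolation estimate and the variational crime from approximating the domain must be controlled; I would either cite the standard theory for isoparametric/curved affine-equivalent elements (\cite{Saute2007,Brenn1994}) and simply assume the mesh covers $\Omega_H^\Lambda$ exactly as stated, or absorb the geometric perturbation into the generic constant $C$. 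Everything else — the trace estimates, the bound $|\hat v(j)|\le C\|v\|_{\widetilde H^1_\alpha}$, the C\'ea and Aubin--Nitsche manipulations — is routine once the two uniformity points are secured.
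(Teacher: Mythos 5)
Your proposal follows essentially the same route as the paper's proof: quasiperiodic extension of $w$ to $\Omega_H$ plus the radiating series representation above $\Gamma_H$ to reduce the $H^2$-regularity to elliptic regularity up to the $C^{1,1}$ boundary $\Gamma$, then quasi-optimality with the piecewise-linear interpolation estimate for the $H^1$ bound, and finally Aubin--Nitsche duality with the same regularity applied to the (uniquely solvable) adjoint problem for the $L^2$ bound, with $\alpha$-uniformity of all constants tracked through Lemma~\ref{th:uniqueAlpha}. One cosmetic caveat: $T_\alpha^+$ is a first-order operator, so it maps $H^{1/2}_\alpha$ into $H^{-1/2}_\alpha$ rather than $H^{3/2}_\alpha$ (the statement after~\eqref{eq:perioDtN} is a typo), but this does not affect your argument since the regularity of $w$ near $\Gamma_H$ comes from the explicit radiating representation, not from any smoothing of $T_\alpha^+$.
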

\begin{remark}
It is crucial that the assumption of the latter theorem guarantee that $w$ is $H^2$-regular, such that, e.g., convexity assumptions for the domain that yield the same regularity would also be applicable. 
\end{remark}
\begin{proof}
(1) 
As $\Omega_H^\Lambda$ is not a domain of class $C^{1,1}$, we cannot directly apply regularity results to the solution $w$. 
Consider instead the extension of $w \in \widetilde{H}^1_\alpha(\Omega_H^\Lambda)$ by quasiperiodicity to $\Omega_H$, and then by~\eqref{eq:URCAlpha} to a weak solution of the Helmholtz equation in $H^1_\loc(\Omega)$.   
As $\Omega$ is by assumption a domain of class $C^{1,1}$, standard elliptic regularity results imply that the extension of $w$ belongs to $H^2_\loc(\Omega)$, such that $w$ itself belongs to $\widetilde{H}^1_\alpha(\Omega_H^\Lambda) \cap H^2(\Omega_H^\Lambda)$. 
Moreover, there is $C>0$ independent of $\alpha $, $w$ or $f_\alpha$ such that $\| w \|_{H^2(\Omega_H^\Lambda)} \leq C \| f_\alpha \|_{H^{1/2}_\alpha(\Gamma)}$. 

(2) 
As the variational formulation~\eqref{eq:heAlpha2} is of Fredholm type with index zero, and since discretization space $V_h^\alpha \subset \widetilde{H}^1_\alpha(\Omega_H^\Lambda)$ is conforming, the standard quasi-optimal convergence result for $w_h$ is that 
\begin{equation}\label{eq:quasiOpti}
  \| w - w_h \|_{\widetilde{H}^1_\alpha(\Omega_H^\Lambda)} \leq C \inf_{v_h \in S_h} \| w - v_h \|_{\widetilde{H}^1_\alpha(\Omega_H^\Lambda)}, 
\end{equation}
see~\cite{Saute2007}. 
As in~\cite[Section 4.7]{Brenn1994} one additionally shows that the space quasiperiodic discrete space $V_h^\alpha$ containing piecewise linear and globally continuous elements satisfies  
\begin{equation}\label{eq:approx}
  \inf_{v_h \in V_h^\alpha} \| w - v_h \|_{\widetilde{H}^1_\alpha(\Omega_H^\Lambda)} 
  \leq C h \| w \|_{H^2(\Omega_H^\Lambda)}.
\end{equation}
(This estimate also holds for other types of finite elements.)
By~\eqref{eq:quasiOpti}, this implies that 
\begin{equation}\label{eq:aux641} 
  \| w - w_h \|_{\widetilde{H}^1_\alpha(\Omega_H^\Lambda)} \leq C h \| w  \|_{H^2(\Omega_H^\Lambda)} 
  \leq C h \| f_\alpha \|_{H^{1/2}_\alpha(\Gamma)}. 
\end{equation} 
Again, the arising constant $C$ can be chosen uniformly in $\alpha \in \Wast$. 

(3) 
The adjoint problem to find $z \in \widetilde{H}^1_\alpha(\Omega_H^\Lambda)$ solving $a_\alpha(v,z) = (v,\phi)_{L^2(\Omega_H^\Lambda)}$ for all $v \in \widetilde{H}^1_\alpha(\Omega_H^\Lambda)$ for some arbitrary $\phi \in \widetilde{H}^1_\alpha(\Omega_H^\Lambda)$ is obviously also Fredholm of index zero. 
As we prove next, uniqueness of solution for this problem follows from uniqueness of solution for the original variational problem~\eqref{eq:heAlpha2}, such that the adjoint problem is also uniquely solvable for all right-hand sides. 

Assume that $z \in \widetilde{H}^1_\alpha(\Omega_H^\Lambda)$ satisfies $a_\alpha(v,z) = 0$ for all $v \in \widetilde{H}^1_\alpha(\Omega_H^\Lambda)$ and denote the Fourier coefficients of $z$ and $v$ on $\Gamma_H$ as $\hat{z}(j)$ and $\hat{v}(j)$, respectively, for $j\in \Z$. 
Choosing $v=z$, we directly obtain from~\eqref{eq:perioDtN} as in~\eqref{eq:aux414} that 
\[ 
  \Im a_\alpha(z,z) = \sum_{j\in\Z} \Im (\i\beta(j)(k,\alpha)) \, |\hat{z}(j)|^2
  = \sum_{j: \, |\Lambda^* j+\alpha|<k} \sqrt{k^2 - |\Lambda^* j + \alpha|^2} \, |\hat{z}(j)|^2,
\]
such that all coefficients $\hat{z}(j)$ corresponding to propagating modes have to vanish. 
The solution $z$ to the homogeneous problem
\begin{align*}
  0 = a_\alpha(v,z) 
  & = \int_{\Omega_H^\Lambda} \left[ \nabla v \cdot \nabla \overline{z} - k^2 v \overline{z} \right] \d{x} 
  -\sum_{j: \, |\Lambda^* j+\alpha| \geq k} \sqrt{|\Lambda^* j + \alpha|^2-k^2} \, \hat{v}(j) \overline{\hat{z}(j)} \\
  & = \overline{a_\alpha(z,v)} \qquad \text{for all } v \in \widetilde{H}^1_\alpha(\Omega_H^\Lambda)
\end{align*}
is hence also a solution to the original variational formulation $a_\alpha(z,v) = 0$ for all $v \in \widetilde{H}^1_\alpha(\Omega_H^\Lambda)$. 
As we already know from Theorem~\ref{th:uniqueAlpha} that this problem is uniquely solvable, we conclude that $z$ vanishes. 

(4) 
The arguments proving the regularity estimate for $w$ from part (1) of this proof directly transfers to the adjoint solution $z$ defined in (3), such that there is $C>0$ independent of $\alpha \in \Wast$ such that $\| z \|_{H^2(\Omega_H^\Lambda)} \leq C \| \phi \|_{L^2(\Omega_H^\Lambda)}$.  
Exploiting Galerkin orthogonality, i.e., $a_\alpha(w-w_h, v_h) =0$ for all $v_h\in V_h^\alpha$, this shows that 
\begin{align*}
 (w-w_h,\phi)_{L^2(\Omega_H^\Lambda)}
 & = a_\alpha(w-w_h, z) = a_\alpha(w-w_h, z-z_h) \\
 & \leq C \| w-w_h \|_{\widetilde{H}^1_\alpha(\Omega_H^\Lambda)} \| z-z_h \|_{\widetilde{H}^1_\alpha(\Omega_H^\Lambda)} \quad \text{for all $z_h\in V_h^\alpha$}.
\end{align*}
Choosing $z_h$ to be the orthogonal projection of $z$ onto $V_h^\alpha$ hence yields by~\eqref{eq:approx} that 
\[
  \| z-z_h \|_{\widetilde{H}^1_\alpha(\Omega_H^\Lambda)} \leq C h \| z \|_{H^2(\Omega_H^\Lambda)} 
  \leq C h \| \phi \|_{L^2(\Omega_H^\Lambda)}.  
\]
Thus, for all $\phi \in \widetilde{H}^1_\alpha(\Omega_H^\Lambda)$ with $L^2$-norm equal to one, we get that $(w-w_h,\phi)_{L^2(\Omega_H^\Lambda)} \leq Ch \| w-w_h \|_{\widetilde{H}^1_\alpha(\Omega_H^\Lambda)}$. 
Density of $\widetilde{H}^1_\alpha(\Omega_H^\Lambda)$ in $L^2(\Omega_H^\Lambda)$ finally shows by taking the supremum over all such $\phi$ that $\| w-w_h \|_{L^2(\Omega_H^\Lambda)} \leq Ch \| w-w_h \|_{\widetilde{H}^1_\alpha(\Omega_H^\Lambda)}$.
Together with~\eqref{eq:aux641}, this bound implies the claim. 
\end{proof}

The error estimates of the numerical approximation of the inverse Bloch transform from Theorem~\ref{th:approxJ-1} together with the error estimates for the finite element approximations from Theorem~\ref{th:convRateFEM} now allow to conclude for an error estimate of the numerical scheme~\eqref{numeric:solution}. 

\begin{theorem} \label{th:main}
Suppose the incident field $u^i \in H^2(\Omega_H)$ satisfies  
\begin{equation}\label{eq:assSource2}
%  \J_\Omega u^i \in W^{1,p}_\p(\Wast; H^2_\alpha(\Omega_H^\Lambda))
%  \quad \text{and} \quad 
  R(u^i) := \sup_{\alpha \in \Wast} \| \J_\Omega u^i(\alpha, \cdot) \|_{H^2_\alpha(\Omega_H^\Lambda)} < \infty 
  \qquad \text{for some $p\in(1,2)$. }
\end{equation}
Then the error between the numerical approximation $u_{N,h}$ from~\eqref{numeric:solution} and the exact solution $u \in H^1(\Omega_H)$ to~\eqref{eq:varFormHEScal} is bounded by
\begin{align*}
\|u_{N,h}-u\|_{L^2(\Omega_H^\Lambda)}\leq C R(u^i)  \left[h^2 + N^{-1}  \right]
\quad \text{and} \quad
\|u_{N,h}-u\|_{H^1(\Omega_H^\Lambda)}\leq C R(u^i) \left[ h + N^{-1} \right].
\end{align*}
\end{theorem}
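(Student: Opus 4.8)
\section*{Proof}

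The plan is to split the total error by the triangle inequality into a \emph{quadrature error}, caused by replacing the inverse Bloch transform $\J_\Omega^{-1}$ by the trapezoidal-rule approximation $\J_{\Omega,N}^{-1}$, and a \emph{finite element error}, caused by replacing the exact quasiperiodic solutions $w(\alpha_j,\cdot)$ of~\eqref{eq:heAlpha} by their discrete counterparts $w_h(\alpha_j,\cdot)$. Writing $w = \J_\Omega u$ for the exact Bloch transform of the solution $u$ of~\eqref{eq:varFormHEScal}, Theorem~\ref{th:BlochOmega} gives $u|_{\Omega_H^\Lambda} = (\J_\Omega^{-1} w)|_{\Omega_H^\Lambda}$ and Theorem~\ref{th:equiScalarPerio} shows that $w(\alpha,\cdot)$ solves the quasiperiodic problem~\eqref{eq:heAlpha2} with data $f_\alpha$. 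Hence, on $\Omega_H^\Lambda$,
\[
  u - u_{N,h} = \big(\J_\Omega^{-1} w - \J_{\Omega,N}^{-1} w\big) + \J_{\Omega,N}^{-1}\big(w - w_h\big),
\]
with the usual abuse $w_h = \{ w_h(\alpha_j,\cdot)\}_{j=1}^N$, so it suffices to bound the two terms on the right separately in $L^2(\Omega_H^\Lambda)$ and in $H^1(\Omega_H^\Lambda)$.

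For the quadrature error, I first note that the hypothesis~\eqref{eq:assSource2} on $u^i$ (the $W^{1,p}_\p$-in-$\alpha$ regularity of $\J_\Omega u^i$ for some $p\in(1,2)$ together with the uniform bound $R(u^i)<\infty$) implies assumption~\eqref{eq:assSource} of Theorem~\ref{th:exiSolScalPerio}, since $H^2_\alpha(\Omega_H^\Lambda)\hookrightarrow H^1_\alpha(\Omega_H^\Lambda)$. Theorem~\ref{th:exiSolScalPerio}(b) then yields $w\in W^{1,p}_\p(\Wast;\widetilde{H}^1_\alpha(\Omega_H^\Lambda))$ together with the stability bound $\|w\|_{W^{1,p}_\p(\Wast;H^1_\alpha(\Omega_H^\Lambda))}\leq C R(u^i)$ extracted from~\eqref{eq:aux1146} in its proof. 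As $p\in(1,2)\subset(1,\infty]$, Theorem~\ref{th:approxJ-1} applies and gives
\[
  \big\| \J_\Omega^{-1} w - \J_{\Omega,N}^{-1} w \big\|_{H^1(\Omega_H^\Lambda)}
  \leq C N^{-1}\, \|w\|_{W^{1,p}_\p(\Wast;H^1_\alpha(\Omega_H^\Lambda))}
  \leq C N^{-1} R(u^i),
\]
and, a fortiori, the same bound in $L^2(\Omega_H^\Lambda)$; this accounts for the $N^{-1}$-contribution in both claimed estimates.

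For the finite element error, I use that $\J_{\Omega,N}^{-1}$ is a finite sum whose weights add up to the fixed constant $\left[\frac{\Lambda}{2\pi}\right]^{1/2}\cdot\frac{2\pi}{\Lambda^*}=\left[\frac{\Lambda}{2\pi}\right]^{1/2}\Lambda$, so by the triangle inequality (in $L^2$ or $H^1$)
\[
  \big\| \J_{\Omega,N}^{-1}(w - w_h) \big\|
  \leq \left[\frac{\Lambda}{2\pi}\right]^{1/2}\frac{2\pi}{N\Lambda^*}\sum_{j=1}^N \big\| w(\alpha_j,\cdot) - w_h(\alpha_j,\cdot)\big\|
  \leq C \max_{1\leq j\leq N}\big\| w(\alpha_j,\cdot) - w_h(\alpha_j,\cdot)\big\|.
\]
Next I verify that the hypotheses of Theorem~\ref{th:convRateFEM} hold uniformly in $\alpha$: the $C^{1,1}$-regularity is assumed, $u^i$ solves the Helmholtz equation in $\Omega$, and, by the trace theorem together with the mapping property $T_\alpha^+\colon H^{3/2}_\alpha\to H^{5/2}_\alpha\subset H^{1/2}_\alpha$, one gets $f_\alpha\in H^{1/2}_\alpha(\Gamma_H^\Lambda)$ with $\|f_\alpha\|_{H^{1/2}_\alpha(\Gamma_H^\Lambda)}\leq C\|\J_\Omega u^i(\alpha,\cdot)\|_{H^2_\alpha(\Omega_H^\Lambda)}\leq C R(u^i)$, the constant independent of $\alpha$. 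Theorem~\ref{th:convRateFEM} then yields, with $\alpha$-independent constant, $\|w(\alpha_j,\cdot)-w_h(\alpha_j,\cdot)\|_{L^2(\Omega_H^\Lambda)}\leq C h^2 R(u^i)$ and $\|w(\alpha_j,\cdot)-w_h(\alpha_j,\cdot)\|_{H^1(\Omega_H^\Lambda)}\leq C h\, R(u^i)$ for every $j$ (on $\Omega_H^\Lambda$ the $\widetilde{H}^1_{\alpha_j}$- and $H^1$-norms of the quasiperiodic difference agree). Inserting this into the displayed bound gives $\|\J_{\Omega,N}^{-1}(w-w_h)\|_{L^2(\Omega_H^\Lambda)}\leq C h^2 R(u^i)$ and $\|\J_{\Omega,N}^{-1}(w-w_h)\|_{H^1(\Omega_H^\Lambda)}\leq C h\, R(u^i)$, and adding the quadrature and finite element contributions yields the claim.

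The mathematically substantial content is entirely contained in the auxiliary Theorems~\ref{th:exiSolScalPerio},~\ref{th:approxJ-1} and~\ref{th:convRateFEM}; the care needed in assembling them concerns the \emph{uniformity of all constants in $\alpha$} (built into Lemma~\ref{th:uniqueAlpha}, Theorem~\ref{th:exiSolScalPerio} and Theorem~\ref{th:convRateFEM}) and the bookkeeping of the trapezoidal weights, whose total mass is a fixed constant rather than growing with $N$, so that summing $N$ uniform per-node finite element errors does not spoil the $h^2$ (resp.\ $h$) rate. Checking that the single regularity hypothesis~\eqref{eq:assSource2} simultaneously supplies the $W^{1,p}$-in-$\alpha$ regularity needed in Theorem~\ref{th:exiSolScalPerio}(b) and the $H^2$-in-$x$ regularity needed for $f_\alpha\in H^{1/2}_\alpha(\Gamma_H^\Lambda)$ in Theorem~\ref{th:convRateFEM} is the one place where the two error mechanisms meet, and is the step I would state most carefully.
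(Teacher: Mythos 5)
Your proof is correct and follows essentially the same route as the paper's: the identical splitting of $u-u_{N,h}$ into the quadrature error $(\J_{\Omega}^{-1}-\J_{\Omega,N}^{-1})w$ and the propagated finite element error $\J_{\Omega,N}^{-1}(w-w_h)$, with the former bounded via Theorem~\ref{th:exiSolScalPerio}(b) and Theorem~\ref{th:approxJ-1} and the latter via the triangle inequality over the trapezoidal sum, the uniform-in-$\alpha$ bound $\|f_\alpha\|_{H^{1/2}_\alpha(\Gamma_H^\Lambda)}\leq C R(u^i)$, and Theorem~\ref{th:convRateFEM}. The only cosmetic remark is that the Dirichlet-to-Neumann operator maps $H^{3/2}_\alpha(\Gamma_H^\Lambda)$ into $H^{1/2}_\alpha(\Gamma_H^\Lambda)$ (it loses one order of smoothness rather than gaining one), but this does not affect your conclusion that $f_\alpha\in H^{1/2}_\alpha(\Gamma_H^\Lambda)$.
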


\begin{remark}
(a) For notational simplicity, the latter theorem merely states approximation results between $u_{N,h}$ and $u$ on $\Omega_H^\Lambda$. 
If one aims to approximate $u$ on regions in $\Omega_H \setminus \Omega_H^\Lambda$, then one first needs to extend the finite element approximations $w_h(\alpha_j,\cdot)$ into these regions to apply the discrete inverse Bloch transform from~\eqref{numeric:inverseBloch} afterwards. 
As quasiperiodic extensions can be computed up to machine precision, the additional numerical error caused by this procedure is marginal.\\[1mm]
(b) Regularity results for general elliptic equations with constant coefficients imply that any twice differentiable solution $u^i$ to the Helmholtz equation in $\Omega_H$ such that $ \sup_{\alpha \in \Wast} \| \J_\Omega u^i(\alpha, \cdot) \|_{H^1_\alpha(\Omega_H^\Lambda)}$ is finite also satisfies that $R(u^i)$ is finite. 
This applies in particular for the two classes of incident fields from Remark~\ref{rem:2}.
\end{remark}

\begin{proof}
Let $X$ be either $L^2(\Omega^\Lambda_H)$ or $\widetilde{H}^1(\Omega^\Lambda _H)$ and denote by $w=\J_\Omega u$ the Bloch transform of $u \in \widetilde{H}^1(\Omega)$ and by $w_h(\alpha,\cdot)$ the solution to~\eqref{eq:heAlpha2} for arbitrary $\alpha\in\Wast$. 
Then 
\begin{align} \label{eq:aux903}
\|u_{N,h}-u\|_X&=\|J^{-1}_{\Omega,N}w_h-J^{-1}_{\Omega}w\|_X  \leq  \|J^{-1}_{\Omega,N}(w_h-w)\|_X+\|(J^{-1}_{\Omega,N}-J^{-1}_{\Omega})w\|_X.
\end{align}
Note that Theorem~\ref{th:exiSolScalPerio}(b) states that $w = \J_\Omega u$ belongs to $W^{1,p}_\p(\Wast; \widetilde{H}^1_\alpha(\Omega_H^\Lambda))$.
The last term in~\eqref{eq:aux903} hence can be bounded due to Theorem~\ref{numeric:inverseBloch} by
\begin{equation} \label{eq:aux916}
  \|(J^{-1}_{\Omega,N}-J^{-1}_{\Omega})w\|_{H^1(\Omega_H^\Lambda)}
  \leq C N^{-1} \|w\|_{W^{1,p}_\p(\Wast;H^1_\alpha(\Omega_H^\Lambda))}.
\end{equation}
Concerning the second-to-last term in~\eqref{eq:aux903}, we already have an estimate for $w_h(\alpha_j,\cdot) - w(\alpha_j,\cdot)$ due to Theorem~\ref{th:convRateFEM}, such that it remains to estimate the operator norm of $J^{-1}_{\Omega,N}$ on $X$. 
To this end, note that this term can be bounded via the definition of $J^{-1}_{\Omega,N}$ by 
\begin{align*}
\|J^{-1}_{\Omega,N}(w_h-w)\|_X &= \left\|\left[\frac{\Lambda}{2\pi}\right]^{1/2}\frac{2\pi}{N\Lambda^*}\sum_{j=1}^N\left(w_h(\alpha_j,\cdot)-w(\alpha_j,\cdot)\right)\right\|_X \\
&\leq \left[\frac{\Lambda}{2\pi}\right]^{1/2}\frac{2\pi}{N\Lambda^*}\sum_{j=1}^N\|w_h(\alpha_j,\cdot)-w(\alpha_j,\cdot)\|_X
 \leq C \frac{h^\ell}{N}\sum_{j=1}^N \| f_{\alpha_j} \|_{H_{\alpha}^{1/2}(\Gamma_H)}, 
\end{align*}
where $\ell=2$ if $X=L^2(\Omega^\Lambda_H)$ and $\ell=1$ if $X=\widetilde{H}^1(\Omega^\Lambda _H)$. 
By the definition of $f_\alpha$ in \eqref{eq:heAlpha2}, the trace bound $\| (\partial / \partial \nu) J_{\Omega} u^i (\alpha,\cdot) \|_{H^{1/2}_\alpha(\Gamma_H^\Lambda)} \leq C \| J_{\Omega} u^i (\alpha,\cdot) \|_{H^2_\alpha(\Gamma_H^\Lambda)}$ (c.f.~the proof of Theorem~\ref{th:exiSolScalPerio}(a)), the uniform boundedness of the operators $T^+_\alpha$ between $H^{\pm 1/2}_\alpha(\Gamma_h^\Lambda)$ (see the proof of Theorem~\ref{th:exiSolScalPerio}(b)), and the trace theorem, we infer that 
\begin{align*}
\| f_\alpha \|_{H^{1/2}_\alpha(\Gamma_h^\Lambda)}
& \leq C \left\| \frac{\partial}{\partial\nu} J_{\Omega} u^i (\alpha,\cdot) \right\|_{H^{1/2}_\alpha(\Gamma_H)} + \| J_{\Gamma_H} u^i (\alpha,\cdot) \|_{H^{3/2}_\alpha(\Gamma_H^\Lambda)}  \\ 
& \leq C \| J_\Omega u^i (\alpha,\cdot) \|_{H^2_\alpha(\Omega_H^\Lambda)} 
\leq \sup_{\alpha \in \Wast} \| \J_\Omega u^i(\alpha, \cdot) \|_{H^2_\alpha(\Omega_H^\Lambda)} = R(u^i) <\infty,
\end{align*}
which is a finite expression due to~\eqref{eq:assSource2}. 
Consequently, $\|J^{-1}_{\Omega,N}(w_h-w)\|_X \leq C R(u^i) h^\ell$. 
Combining the latter bound with~\eqref{eq:aux916} and~\eqref{eq:aux903}, we arrive at $\|u_{N,h}-u\|_X \leq C \big[R(u^i) h^\ell + N^{-1} \| \J_\Omega u \|_{W^{1,p}_\p(\Wast;H^1_\alpha(\Omega_H^\Lambda))} \big]$. 
We finally bound $\| \J_\Omega u \|_{W^{1,p}_\p(\Wast;H^1_\alpha(\Omega_H^\Lambda))}$ using Theorem~\ref{th:exiSolScalPerio}(b) by a fixed constant times $\sup_{\alpha\in\Wast} \| \J_\Omega u^i(\alpha,\cdot) \|_{H^1_\alpha(\Omega_H^\Lambda)}$, which yields the claimed error estimates. 
The proof is finished.
\end{proof}

\section{Numerical Examples}\label{se:num}
In this section, we will show some numerical results for our computational method that in particular confirm the theoretical convergence rates. 
To this end, we consider incident fields in form of half-space point sources from~\eqref{eq:greenFunc}, i.e., 
\begin{equation*}
u^i(x) := G(x,y) = \frac{\i}{4} \left[ H^{(1)}_0(k|x-y|) - H^{(1)}_0(k|x-y'|) \right], \quad x \not = y \in \R^2,
\end{equation*}
with $y$ located below the $\Lambda$-periodic surface $\Gamma$ with $y_2>0$ and $y'=(y_1,-y_2)^\top$. In our numerical examples, we fix the period $\Lambda$ to be $2\pi$. From \eqref{eq:BlochZeta}, the Bloch transform of the incident field is defined as
\begin{eqnarray*}
(J_{\R}u^i)(\alpha,x)
= \sum_{j\in\Z}u^i (x_1+2\pi j,x_2) e^{-2\pi\i \, j \, \alpha}, 
\quad x +2\pi \left( \begin{smallmatrix} j\\ 0 \end{smallmatrix} \right) \not = y \in \R^2.
\end{eqnarray*}
Recall the definition of the $\alpha$-quasiperiodic Green's function of the Helmholtz equation in two dimensions,
\begin{equation*}
\Phi_\alpha(x,y)=\frac{\i}{4}\sum_{j\in\Z}H_0^{(1)}\left(k\sqrt{(x_1-2\pi j-y_1)^2+(x_2-y_2)^2} \right) e^{2\pi\i \, j \, \alpha}, \quad 
x_2 \not = y_2. 
\end{equation*}
It is easy to show that 
\begin{equation*}
(J_{\R}u^i)(\alpha,x) = \Phi_\alpha(x,y) - \Phi_\alpha(x,y'), \quad 
x_2 \not = \pm y_2.  
\end{equation*}
Let us now, in analogy to~\eqref{eq:URCAlpha}, define numbers $\alpha(j) = \Lambda^*j + \alpha = j + \alpha$ (not to be mixed up with the discretization points $\alpha_j$!) that fit the $\beta(j)$ from~\eqref{eq:URCAlpha},
\begin{equation*}
\alpha(j) = j+\alpha,\quad \beta(j)=\begin{cases}
\sqrt{k^2-\alpha(j)^2}, & \text{if }|\alpha(j)|\leq k,\\
\i\sqrt{\alpha(j)^2-k^2}, & \text{if }|\alpha(j)|>k,
\end{cases}
\qquad \text{for } j \in\Z. 
\end{equation*}
For $\alpha$ such that $\beta(j)\neq 0$ for any $j\in\Z$, the $\alpha$-quasiperiodic Green's function has the basic eigenfunction expansion 
\begin{equation*}
\Phi_\alpha(x,y)=\frac{\i}{4\pi}\sum_{j\in\Z}\frac{1}{\beta(j)} e^{\i\alpha(j)(x_1-y_1)+\i\beta(j)|x_2-y_2|}, \quad 
x_2 \not = \pm y_2.
\end{equation*}
As $y$ is located below the surface $\Gamma$, then in the domain $\{ x\in\Omega: \, x_2 > y_2 \}$ and in particular on $\Gamma_H$ there holds that 
\begin{equation*}
\Phi_\alpha(x,y)=\frac{\i}{4\pi}\sum_{j\in\Z}\frac{1}{\beta(j)} e^{\i\alpha(j)(x_1-y_1)+\i\beta(j)(x_2-y_2)}, \quad x_2 > y_2.
\end{equation*}
Similarly, we also note that
\begin{equation*}
\Phi_\alpha(x,y')=\frac{\i}{4\pi}\sum_{j\in\Z}\frac{1}{\beta(j)} e^{\i\alpha(j)(x_1-y_1)+\i\beta(j)(x_2+y_2)}, \quad x_2 > 0.
\end{equation*}
Thus, $J_{\R}u^i$ can also be represented in the form
\begin{eqnarray}
(J_{\R}u^i)(\alpha,x)&=&\frac{\i}{4\pi}\sum_{j\in\Z}\frac{1}{\beta(j)} e^{\i\alpha(j)(x_1-y_1)+\i\beta(j)x_2}\left[ e^{-\i\beta(j) y_2} - e^{\i\beta(j) y_2} \right] \nonumber \\
&=&\frac{1}{2\pi}\sum_{j\in\Z} e^{\i\alpha(j)(x_1-y_1)+\i\beta(j)x_2} \, \mathrm{sinc}(\beta(j)y_2) \, y_2, \label{eq:repUi}
\end{eqnarray}
where $\rm{sinc}$ is the (smooth) function defined by $\rm{sinc}(t)=\sin(t)/t$ for $t\neq 0$ and $\rm{sinc}(0) = 1$. 
The formula in~\eqref{eq:repUi} allows to evaluate the Bloch transform of the incident field $u^i = G(\cdot,y)$ in case that there is $j\in\N$ such that $\beta(j)=0$. 
\begin{remark}
Formula~\eqref{eq:repUi} in particular allows to evaluate the $\alpha$-quasiperiodic Green's function in case that one of the square roots $\beta(j)$ vanishes. 
This is a nice feature if one aims to use boundary integral equations to solve quasiperiodic scattering problems. 
\end{remark}
\begin{figure}[tttttt!!!b]
\centering
\begin{tabular}{c c c}
\includegraphics[width=0.3\textwidth]{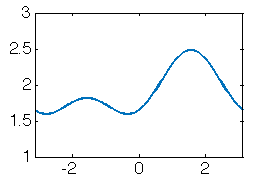} 
& \includegraphics[width=0.3\textwidth]{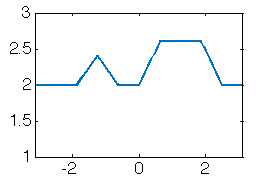} 
& \includegraphics[width=0.3\textwidth]{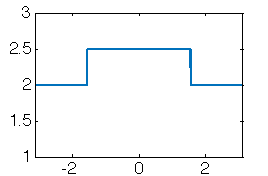}\\[-0cm]
(a) & (b) & (c) 
\end{tabular}%
\begin{picture}(0,0)
  \put(-12,-0.3){$f_1$}
  \put(-6.85,-0.3){$f_2$}
  \put(-1.75,-0.3){$f_3$}
\end{picture}%
\caption{(a)-(c): The three surfaces $\Gamma_{1,2,3}$ defined by the functions $f_{1,2,3}$.}
\end{figure}

In the remainder of this section, we present numerical experiments for three different periodic surfaces. The first surface, denoted by $\Gamma_1$, is a smooth surface defined by
\begin{equation*}
f_1(t)=\frac{\sin(t)}{3}-\frac{\cos(2t)}{4}+1.9  \qquad \text{ for } t\in\R.
\end{equation*}
The second surface, denoted by $\Gamma_2$, is a graph of the Lipschitz function
\begin{equation*}
f_2(t)=\begin{cases}
2(t/\pi+0.6)+2, & t\in[-0.6\pi,-0.4\pi],\\
2(-t/\pi-0.2)+2, & t\in (-0.4\pi,-0.2\pi],\\
3t/\pi+2, & t\in[0,0.2\pi],\\
2.6, & t\in(0.2\pi,0.6\pi),\\
3(-t/\pi+0.8)+2, & t\in [0.6\pi,0.8\pi],\\
2, & \text{otherwise,}
\end{cases}
\qquad \text{ for } t\in\R.
\end{equation*}
The third surface, denoted by $\Gamma_3$, is defined by a piecewise constant function
\begin{equation*}
f_3(t)=\begin{cases}
2.5, & t\in[-0.5\pi,0.5\pi],\\
2, & \text{otherwise,}
\end{cases}
 \qquad \text{ for } t\in\R.
\end{equation*}
For each surface, we chose $H=3$ and computed numerical solutions $u_{N,h}$ defined in~\eqref{numeric:solution} on $\Omega_3^{2\pi}$ for incident fields $u^i = G(\cdot,y)$ with different locations of the source point $y$ and two wave numbers $k=1$ and $k=10$ (such that the wave lengths equal 6.28 and 0.628, respectively).  
The sum of the variational form $a_\alpha$ implementing the operator $T^+_\alpha$ is truncated at $M=80$. 
As the exact solution $u$ to the scattering problem equals the explicitly known function $G(\cdot, y)$, we can then compute (up to quadrature errors) the relative $L^2$-error $\| u_{N,h} - u \|_{L^2(\Omega_3^{2\pi})} / \| u \|_{L^2(\Omega_3^{2\pi})}$. 

In Examples 1 and 2, the periodic surface is $\Gamma_1$, the source points are $y=(-1,0.4)^\top$ and $y=(0.5,0.2)^\top$, and the wave number equals $k=1$ and $k=10$, respectively. 
Examples 3 and 4 share the setting of Examples 1 and 2 for the periodic surface $\Gamma_2$ instead of $\Gamma_1$.
Examples 5 and 6 share the setting of Examples 1 and 2 as well but involve $\Gamma_3$.

The numerical examples are computed for $N=20,40,80,160,320$ quasiperiodicities and and mesh widths $h=0.16,0.08,0.04,0.02,0.01$. The following tables show the relative errors in the $L^2(\Omega_3^{2\pi})$-norm. 
For surface $\Gamma_1$, see Tables \ref{surf1k1} and~\ref{surf1k10}, for surface $\Gamma_2$ see Tables \ref{surf2k1} and~\ref{surf2k10}, and for surface $\Gamma_3$ see Tables \ref{surf3k1} and~\ref{surf3k10}.

\begin{table}[htb]
\centering
\caption{Relative $L^2$-errors for Example 1 (surface $\Gamma_1$, source at $y=(-1,0.4)^\top$, $k=1$).}\label{surf1k1}
\begin{tabular}
{|p{1.8cm}<{\centering}||p{2cm}<{\centering}|p{2cm}<{\centering}
 |p{2cm}<{\centering}|p{2cm}<{\centering}|p{2cm}<{\centering}|}
\hline
  & $h=0.16$ & $h=0.08$ & $h=0.04$ & $h=0.02$ & $h=0.01$\\
\hline
\hline
$N=20$&$1.65$E$-02$&$1.59$E$-02$&$1.59$E$-02$&$1.59$E$-02$&$1.58$E$-02$\\
\hline
$N=40$&$6.09$E$-03$&$5.70$E$-03$&$5.62$E$-03$&$5.61$E$-03$&$5.60$E$-03$\\
\hline
$N=80$&$2.68$E$-03$&$2.10$E$-03$&$2.00$E$-03$&$1.99$E$-03$&$1.98$E$-03$\\
\hline
$N=160$&$1.69$E$-03$&$8.61$E$-04$&$7.27$E$-04$&$7.06$E$-04$&$7.01$E$-04$\\
\hline
$N=320$&$1.46$E$-03$&$4.84$E$-04$&$2.83$E$-04$&$2.54$E$-04$&$2.49$E$-04$\\
\hline
\end{tabular}
\end{table}

\begin{table}[htb]
\centering
\caption{Relative $L^2$-errors for Example 2 (surface $\Gamma_1$, source at $y=(0.5,0.2)^\top$, $k=10$).}\label{surf1k10}
\begin{tabular}
{|p{1.8cm}<{\centering}||p{2cm}<{\centering}|p{2cm}<{\centering}
 |p{2cm}<{\centering}|p{2cm}<{\centering}|p{2cm}<{\centering}|}
\hline
  & $h=0.08$ & $h=0.04$ & $h=0.02$ & $h=0.01$\\
\hline
\hline
$N=20$&$1.99$E$-01$&$5.64$E$-02$&$3.09$E$-02$&$3.03$E$-02$\\
\hline
$N=40$&$1.99$E$-01$&$5.30$E$-02$&$1.63$E$-02$&$1.11$E$-02$\\
\hline
$N=80$&$1.99$E$-01$&$5.28$E$-02$&$1.36$E$-02$&$4.93$E$-03$\\
\hline
$N=160$&$1.99$E$-01$&$5.28$E$-02$&$1.33$E$-02$&$3.54$E$-03$\\
\hline
$N=320$&$1.99$E$-01$&$5.29$E$-02$&$1.33$E$-02$&$3.36$E$-03$\\
\hline
\end{tabular}
\end{table}

\begin{table}[ht]
\centering
\caption{Relative $L^2$-errors for Example 3 (surface $\Gamma_2$, source at $y=(-1,0.4)^\top$, $k=1$).}\label{surf2k1}
\begin{tabular}
{|p{1.8cm}<{\centering}||p{2cm}<{\centering}|p{2cm}<{\centering}
 |p{2cm}<{\centering}|p{2cm}<{\centering}|p{2cm}<{\centering}|}
\hline
  & $h=0.16$ & $h=0.08$ & $h=0.04$ & $h=0.02$ & $h=0.01$\\
\hline
\hline
$N=20$&$1.78$E$-02$&$1.77$E$-02$&$1.77$E$-02$&$1.77$E$-02$&$1.77$E$-02$\\
\hline
$N=40$&$6.46$E$-03$&$6.30$E$-03$&$6.26$E$-03$&$6.25$E$-03$&$6.25$E$-03$\\
\hline
$N=80$&$2.55$E$-03$&$2.27$E$-03$&$2.22$E$-03$&$2.21$E$-03$&$2.21$E$-03$\\
\hline
$N=160$&$1.36$E$-03$&$8.64$E$-04$&$7.96$E$-04$&$7.85$E$-04$&$7.82$E$-04$\\
\hline
$N=320$&$1.07$E$-03$&$4.10$E$-04$&$2.96$E$-04$&$2.80$E$-04$&$2.77$E$-04$\\
\hline
\end{tabular}
\end{table}

\begin{table}[ht]
\centering
\caption{Relative $L^2$-errors for Example 4 (surface $\Gamma_2$, source at $y=(0.5,0.2)^\top$, $k=10$).}\label{surf2k10}
\begin{tabular}
{|p{1.8cm}<{\centering}||p{2cm}<{\centering}|p{2cm}<{\centering}
 |p{2cm}<{\centering}|p{2cm}<{\centering}|p{2cm}<{\centering}|}
\hline
  & $h=0.08$ & $h=0.04$ & $h=0.02$ & $h=0.01$\\
\hline
\hline
$N=20$&$1.22$E$-01$&$3.80$E$-02$&$2.77$E$-02$&$2.79$E$-02$\\
\hline
$N=40$&$1.22$E$-01$&$3.24$E$-02$&$1.21$E$-02$&$1.00$E$-02$\\
\hline
$N=80$&$1.22$E$-01$&$3.20$E$-02$&$8.61$E$-03$&$3.94$E$-03$\\
\hline
$N=160$&$1.22$E$-01$&$3.21$E$-02$&$8.16$E$-03$&$2.32$E$-03$\\
\hline
$N=320$&$1.23$E$-01$&$3.22$E$-02$&$8.13$E$-03$&$2.08$E$-03$\\
\hline
\end{tabular}
\end{table}

\begin{table}[ht]
\centering
\caption{Relative $L^2$-errors for Example 5 (surface $\Gamma_3$, source at $y=(-1,0.4)^\top$, $k=1$).}\label{surf3k1}
\begin{tabular}
{|p{1.8cm}<{\centering}||p{2cm}<{\centering}|p{2cm}<{\centering}
 |p{2cm}<{\centering}|p{2cm}<{\centering}|p{2cm}<{\centering}|}
\hline
  & $h=0.16$ & $h=0.08$ & $h=0.04$ & $h=0.02$ & $h=0.01$\\
\hline
\hline
$N=20$&$1.94$E$-02$&$1.93$E$-02$&$1.92$E$-02$&$1.92$E$-02$&$1.92$E$-02$\\
\hline
$N=40$&$6.96$E$-03$&$6.84$E$-03$&$6.81$E$-03$&$6.80$E$-03$&$6.80$E$-03$\\
\hline
$N=80$&$2.61$E$-03$&$2.45$E$-03$&$2.42$E$-03$&$2.40$E$-03$&$2.40$E$-03$\\
\hline
$N=160$&$1.14$E$-03$&$9.09$E$-04$&$8.63$E$-04$&$8.51$E$-04$&$8.50$E$-04$\\
\hline
$N=320$&$7.11$E$-04$&$3.80$E$-04$&$3.16$E$-04$&$3.03$E$-04$&$3.01$E$-04$\\
\hline
\end{tabular}
\end{table}

From Tables \ref{surf1k1}-\ref{surf3k10}, we note that the indicated errors decrease in $N$ and $h$ individually up to error stagnation: For $N$ sufficiently large, the error from the discrete inverse Bloch transform is much smaller compared to the error from the finite element method, see Tables \ref{surf1k10}, \ref{surf2k10}, \ref{surf3k10} for $N=320$. Fixing $N=320$, the relative errors then decrease with the rate of $O(h^2)$ as predicted in theory, see~Theorem~\ref{th:main}. When $h$ is sufficiently small, the error from the finite element method is much smaller compared to the error from the discrete inverse Bloch transform, see Tables \ref{surf1k1}, \ref{surf2k1}, \ref{surf3k1} for $h=0.01$. Fixing $h=0.01$, the relative errors then decrease faster than the rate $O(N^{-1})$ predicted by Theorem \ref{th:main}. 
This might be seen as an indicator that the regularity result in Theorem~\ref{th:exiSolScalPerio}(b) can be somewhat improved which respect to smoothness in $\alpha$. 

\begin{table}[ht]
\centering
\caption{Relative $L^2$-errors for Example 6 (surface $\Gamma_3$, source at $y=(0.5,0.2)^\top$, $k=10$).}\label{surf3k10}
\begin{tabular}
{|p{1.8cm}<{\centering}||p{2cm}<{\centering}|p{2cm}<{\centering}
 |p{2cm}<{\centering}|p{2cm}<{\centering}|p{2cm}<{\centering}|}
\hline
  & $h=0.08$ & $h=0.04$ & $h=0.02$ & $h=0.01$\\
\hline
\hline
$N=20$&$1.10$E$-01$&$9.77$E$-02$&$1.16$E$-01$&$1.22$E$-01$\\
\hline
$N=40$&$1.14$E$-01$&$3.81$E$-02$&$4.01$E$-02$&$4.30$E$-02$\\
\hline
$N=80$&$1.21$E$-01$&$2.94$E$-02$&$1.43$E$-02$&$1.49$E$-03$\\
\hline
$N=160$&$1.25$E$-01$&$3.07$E$-02$&$8.06$E$-03$&$5.17$E$-03$\\
\hline
$N=320$&$1.26$E$-01$&$3.17$E$-02$&$7.75$E$-03$&$2.31$E$-03$\\
\hline
\end{tabular}
\end{table}

Finally, we balance the two error terms in the $L^2$-estimate $\|u_{N,h}-u\|_{L^2(\Omega_H^\Lambda)}\leq C R(u^i)  \left[h^2 + N^{-1}  \right]$ by choosing $h= c_0 N^{-1/2}$ for $c_0= 2/(5\sqrt{5})$ and $N$ equal to 20, 40, 80, and 320. This yields three pairs of $(h,N)$ equal to $(0.04,20)$, $(0.02,80)$ and $(0.01,320)$. Figure~\ref{error} shows plots in logarithmic scale of the relative $L^2$-errors for the six examples corresponding to these pairs. The lines for Example 1,3 and 5 all have slopes of about $-1.5$ while the lines of Example 2 and 4 have similar slopes of about $-1$, the line of Example 6 has a slope of $-1.36$. This shows that the numerical results converges at the rate of $N^{-1}$ or even faster.  This means that the error is bounded by $O(N^{-1})$, i.e. $O(h^2)$, which verifies the result in Theorem \ref{th:main}.

\begin{figure}[t]
\centering
\includegraphics[width=0.5\textwidth]{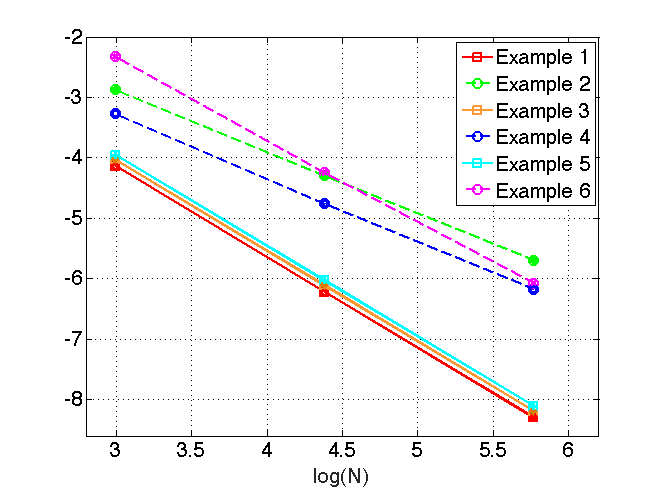}
\caption{The relative $L^2$-errors for the six considered examples with $h= c_0 N^{-1/2}$ plotted in logarithmic scale over $N$.}
\label{error}
\end{figure}

%\bibliographystyle{alpha}
%\bibliography{ip-biblio.bib} % ../../ip-biblio/ip-biblio.bib,

\providecommand{\noopsort}[1]{}

\end{document}